\numberwithin{equation}{section}
\newtheorem{theorem}{Theorem}[section]
\newtheorem{proposition}[theorem]{Proposition}
\newtheorem{lemma}[theorem]{Lemma}
\theoremstyle{definition}
\newtheorem{definition}[theorem]{Definition}
\newtheorem{notation}[theorem]{Notation}
\newcommand{\Sc}{\mathcal{S}}
\newcommand {\E} {\mathbb{E}}
\newcommand {\R} {\mathbb{R}}
\newcommand {\C} {\mathbb{C}}
\newcommand {\Z} {\mathbb{Z}}
\newcommand {\Cc} {\mathcal{C}}
\newcommand {\Pc} {\mathcal{P}}
\newcommand {\Ac} {\mathcal{A}}
\newcommand {\Tb} {\mathbb{T}}
\newcommand {\var} {\operatorname{Var}}
\newcommand {\sing} {\operatorname{Sing}}
\newcommand{\Ec}{\mathcal{E}}
\newcommand{\Zc}{\mathcal{Z}}
\begin{document}


\title{Points on nodal lines with given direction}
\author{Ze\'ev Rudnick and Igor Wigman}
\address{
School of Mathematical Sciences, Tel Aviv University, Tel Aviv,
Israel} \email{rudnick@post.tau.ac.il}

\address{Department of Mathematics, King's College London, UK}
\email{igor.wigman@kcl.ac.uk}

\date{\today}

\begin{abstract}

We study of the directional distribution function of nodal lines for eigenfunctions of the Laplacian on a planar domain. This quantity counts the number of points  where the normal to the nodal line points in a given direction. We give upper bounds for the flat torus, and compute the expected number for arithmetic random waves.
\end{abstract}

\maketitle

\section{Introduction}

\subsection{Nodal directions}

One of the more intriguing characteristics of a Laplace eigenfunction on a planar domain
is its nodal set. Much progress has been achieved in understanding its length, notably
the work of Donnelly and Fefferman \cite{Donnelly-Fefferman}, and the recent breakthrough by Logunov and Mallinikova \cite{LM, Logunov1, Logunov2}, and several researchers have tried to understand the number of nodal domains  (the connected components of the complement of the nodal set),
starting with Courant's upper bound on that number, see \cite{Bourgain Pleijel} for the latest result.  
In this note, we propose to study a different  quantity, the {\em directional distribution}, measuring an aspect of the curvature of nodal lines.

Let $\Omega$ be a planar domain, with piecewise smooth boundary, and let $f$ be an eigenfunction of the Dirichlet Laplacian, with eigenvalue $E$: $-\Delta f=Ef$.
Given a direction $\zeta\in S^{1} $, let $ N_{\zeta}(f)$ be the number of points $x$ on the nodal line $\{x\in \Omega: f(x)=0\}$ with normal pointing in the direction $\pm \zeta$:
\begin{equation}
\label{eq:Nzeta=f=nablf=zeta def}
N_{\zeta}(f)= \#\left\{x\in\Omega:\: f(x)=0, \frac{\nabla f(x)}{\|\nabla f(x)\|} = \pm \zeta \right\}.
\end{equation}
In particular \eqref{eq:Nzeta=f=nablf=zeta def} requires that $\nabla f(x)\neq 0$, i.e. $x$ is a non-singular point of the nodal line.

In a few separable cases, such as an irrational rectangle, or the disk, one can explicitly compute $N_\zeta(f)$: For the irrational rectangle, the nodal line is a grid and $N_\zeta(f)=0,\infty$, while for the disk the nodal line is a union of diameters and circles, and we find $N_\zeta(f)\ll \sqrt{E}$ except for $O(\sqrt{E})$ choices of $\zeta$, when $N_\zeta(f)=\infty$, see Appendix~\ref{sec:separable}. However, in most cases one cannot explicitly  compute $N_\zeta(f)$.
The following heuristic suggests that generically the order of magnitude of $N_{\zeta}(f)$ is about $E$: We expect a ``typical" eigenfunction to have an order of magnitude of
$E$ nodal domains \cite{Sodin-Nazarov}, and looking at several plots of nodal portraits such as Figure \ref{fig:nod tor} would lead us to believe that many of the nodal domains are ovals, or at least have a controlled geometry, with $O(1)$ points per nodal domain  with normal parallel to any given direction.
Therefore we are led to expect that the total number of points on the nodal line with normal parallel to $\pm \zeta$ should be about
$E$ (if it is finite).


\begin{figure}[ht]
\begin{center}
\includegraphics[height=60mm]{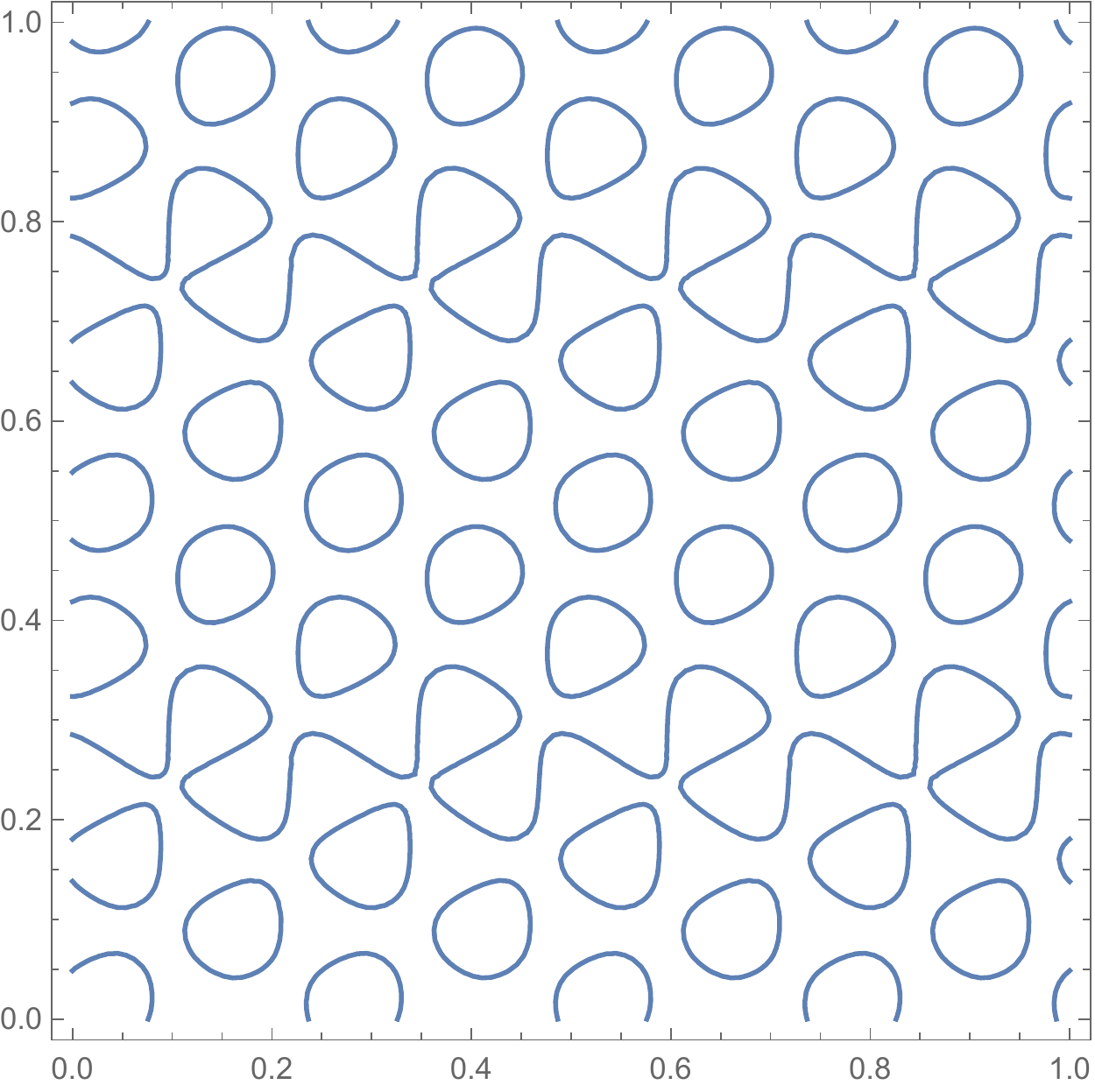}
\caption{The nodal line of the toral eigenfunction $$\sin (2 \pi  (8 x-y))+\sin (2 \pi  (4 x+7 y))+\cos (2 \pi  (4 x-7 y)).$$
A significant proportion of its components are ovals.}
\label{fig:nod tor}
\end{center}
\end{figure}

To try and validate this heuristic, we study $N_\zeta(f)$  on the standard flat torus $\Tb=\R^2/\Z^2$ (equivalently taking $\Omega$ to be the square, and imposing periodic, rather than Dirichlet, boundary conditions), for both random and deterministic eigenfunctions. We prove deterministic upper bounds, and compute the expected value of $N_{\zeta}$ for ``arithmetic random waves" described below.

\subsection{A deterministic upper bound}

We want to establish individual upper bounds on $N_\zeta(f)$.
Strictly speaking, this is not possible, since there are cases where $N_\zeta(f)= \infty$.
For instance, the nodal set of the eigenfunctions $f(x,y)=\sin(2\pi mx)\sin(2\pi ny)$ ($m,n\geq 1$) is a union of straight lines with $N_\zeta(f)=0$ unless $\zeta=\pm(1,0),\pm(0,1)$ in which case $N_\zeta(f)=\infty$.
More generally, one can construct toral eigenfunctions $f$ so that their nodal lines contain a closed geodesic, but also curved components,
see Figure~\ref{fig:nod line geodesic} where we display the eigenfunction
\begin{equation*}
\begin{split}
f(x,y) &=    2\Big(
\sin 8x \sin y + \sin 7x \sin 4y +\sin x \sin 8y +\sin 4x\sin 7y \Big)
\\ &= 4\sin(x)\sin(y)\Big(\cos x+\cos y\Big) h(x,y)
\end{split}
\end{equation*}
where
 \begin{multline}\label{formula for h}
    h(x,y)=   2 \cos (3 x-5 y)-2 \cos (2 x-4 y)-2 \cos (4 x-4   y)+4 \cos (x-3 y)
    +4 \cos (3 x-3 y)     \\  +2 \cos (5 x-3 y)    -4 \cos (2 x-2 y)-2 \cos (4 x-2 y)
        +6   \cos (x-y)+4 \cos (3 x-y)+6 \cos (x+y)
   +4 \cos (3 x+y)
   \\
   -4 \cos (2 x+2 y)   -2 \cos (4 x+2 y)
  +4   \cos (x+3 y)+4 \cos (3 x+3 y)
   +2 \cos (5 x+3 y)
  -2 \cos (2 x+4 y)
  \\  -2 \cos (4 x+4 y)+2 \cos (3 x+5 y)
  -4 \cos (2 x)
     +2 \cos (6 x)-4 \cos (2 y)+2 \cos (6 y)-2 .
     \end{multline}
Theorem~\ref{thm:upper bnd class} below asserts an upper bound for $N_{\zeta}(f)$ with the only exceptions
being when the nodal line contains a closed geodesic. It will follow as a particular case of a structure result on the set
\begin{equation}
\label{eq:Aczeta(f) def}
\Ac_{\zeta}(f) = \left\{ x\in\Omega:\: f(x)=0,  \langle \nabla f(x), \zeta^{\perp} \rangle = 0 \right\}
\end{equation}
of ``nodal directional points", i.e. the set of nodal points where $\nabla f$ is orthogonal to $\zeta^{\perp}$ (thus co-linear to $\zeta$).
Note that, by the definition, in addition to the set on the r.h.s. of \eqref{eq:Nzeta=f=nablf=zeta def}, $\Ac_{\zeta}(f)$ contains all
the singular nodal points of $f^{-1}(0)$, and could also contain certain closed geodesics in direction orthogonal to $\zeta$, as we shall see below.
To state Theorem \ref{thm:upper bnd class} we introduce the (standard) notion of ``height" for a rational vector.

 \begin{figure}[ht]
\begin{center}
\includegraphics[height=60mm]{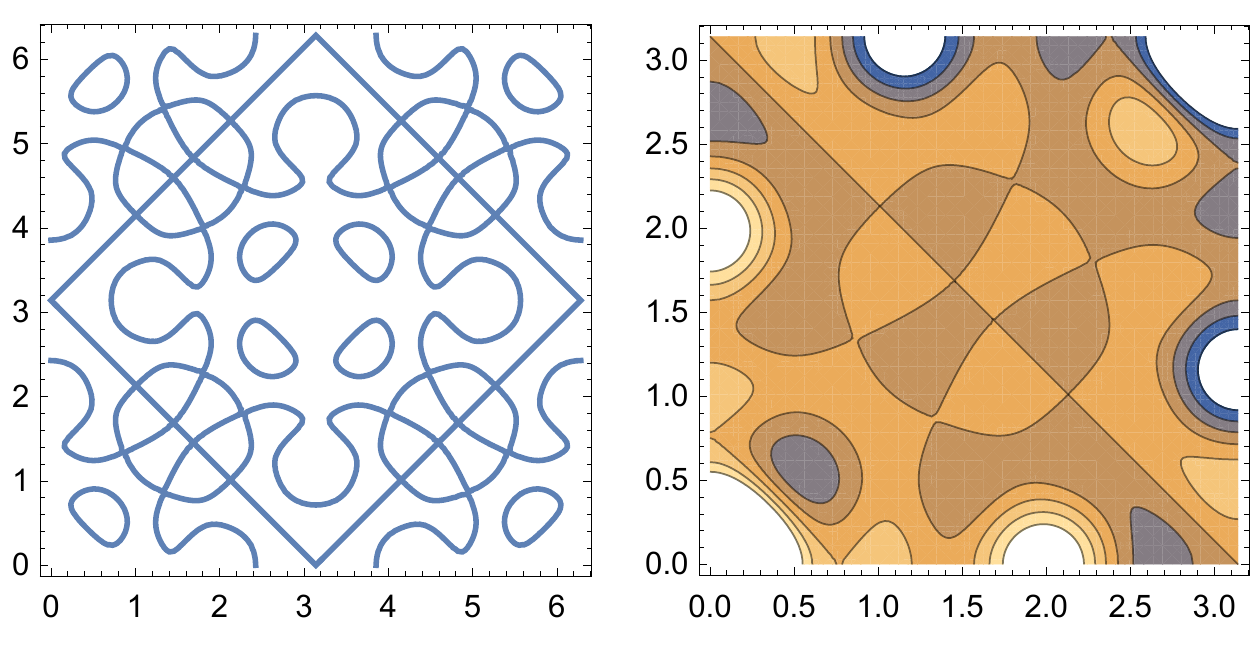}
\caption{Left: nodal set of the eigenfunction
$f(x,y) =  2\Big(
\sin 8x \sin y + \sin 7x \sin 4y +\sin x \sin 8y +\sin 4x\sin 7y \Big)
= 4\sin(x)\sin(y)\Big(\cos x+\cos y\Big) h(x,y)$
for the trigonometric polynomial $h(x,y)$ in \eqref{formula for h},
on the full square $[0,2\pi]\times [0,2\pi]$.
Note the lines $x,y\in\pi \Z$, $x\pm y \in \pi(1+2\Z)$. The scaled function $f(x/2\pi,y/2\pi)$
is a toral eigenfunction.
Right: Contours of $h(x,y)$ on the square $[0,\pi]\times [0,\pi]$.
}
\label{fig:nod line geodesic}
\end{center}
\end{figure}

\begin{notation}[Height of a rational vector]

\begin{enumerate}

\item A {\em rational direction} $\zeta \in \Sc^{1}$ is one which is a multiple of an integer vector. Note that $\zeta$ is rational if and only if the orthogonal direction $\zeta^\perp$ is rational.

\item For a rational vector $\zeta\in\Sc^{1}$ we denote its {\em height} by $h(\zeta) = \max(|k_{1}|,|k_{2}|)$ where $(k_1,k_2)$ is a primitive integer vector (unique up to sign) in the direction of $\zeta$:
  \[
  \zeta = \pm \frac{(k_{1},k_{2})}{\sqrt{k_1^2+k_2^2}}\; .
  \]
  Note that $h(\zeta)=h(\zeta^\perp)$.
\end{enumerate}
\end{notation}

\begin{theorem}
\label{thm:upper bnd class}

Let $\zeta\in\Sc^{1}$ be a direction, and $f$ be a toral eigenfunction: $-\Delta f=Ef$ for some $E>0$.

\begin{enumerate}

\item If $\zeta$ is rational, then the set $\Ac_{\zeta}(f)$ consists of at
most $$\frac{\sqrt{E}}{\pi h(\zeta)}$$
closed geodesics orthogonal to $\zeta$,
at most
$\frac{2}{\pi^{2}} \cdot E$ nonsingular points not lying on the geodesics, and possibly, singular points of the nodal set.

\item If $\zeta$ is not rational, then the set $\Ac_{\zeta}(f)$ consists of at most $\frac{2}{\pi^{2}}\cdot E$ nonsingular points,
and possibly, singular points of the nodal set.

\item In particular, if $\Ac_{\zeta}(f)$ does not contain a closed geodesic, then
\begin{equation*}
N_{\zeta}(f) \le \frac{2}{\pi^2} \cdot E.
\end{equation*}

\end{enumerate}

\end{theorem}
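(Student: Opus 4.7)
The plan is to realize $\mathcal{A}_\zeta(f)$ as the common zero set of two toral eigenfunctions of the same energy, and then combine a Bezout-type bound on $\mathbb{P}^{1}\times\mathbb{P}^{1}$ with a geometric identification of possible curve components. Set $g(x) := \langle \nabla f(x), \zeta^{\perp}\rangle$: since $\partial_{\zeta^\perp}$ commutes with the Laplacian, $g$ is again a toral eigenfunction with eigenvalue $E$, and by definition $\mathcal{A}_{\zeta}(f)=f^{-1}(0)\cap g^{-1}(0)$. Writing $n := E/(4\pi^{2})$, both $f$ and $g$ are supported on Fourier frequencies $\lambda\in\mathbb{Z}^{2}$ with $|\lambda|^{2}=n$, so in particular $|\lambda_{j}|\le\sqrt{n}$. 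Passing to multiplicative coordinates $z=e^{2\pi i x_{1}}, w=e^{2\pi i x_{2}}$ and multiplying by $z^{M}w^{M}$ with $M=\lfloor\sqrt{n}\rfloor$ converts $f,g$ into honest polynomials $P,Q$, bihomogenizable to bi-degree at most $(2M,2M)$ on $\mathbb{P}^{1}\times\mathbb{P}^{1}$; points of $\mathcal{A}_{\zeta}(f)$ correspond to common zeros of $P,Q$ lying on the real torus $(S^{1})^{2}\subset(\mathbb{C}^{*})^{2}$.

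The key dichotomy is whether $P$ and $Q$ share a nontrivial polynomial factor. If they are coprime, then Bezout's theorem on $\mathbb{P}^{1}\times\mathbb{P}^{1}$ bounds the number of common zeros, counted with multiplicity over $\mathbb{C}$, by $2\cdot(2M)^{2}=8M^{2}\le 8n=2E/\pi^{2}$. Thus $\mathcal{A}_{\zeta}(f)$ is finite with $|\mathcal{A}_{\zeta}(f)|\le 2E/\pi^{2}$, yielding the nonsingular count in parts (1)--(3) in this case.

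Suppose instead $P,Q$ share a common irreducible factor $R$. I would show that every $1$-dimensional real-analytic component $\gamma$ of $\{R=0\}\cap\mathbb{T}^{2}$ must be a closed geodesic in the direction $\zeta^{\perp}$. Indeed, away from the (isolated) singular nodal points of $f$---the Bers--Hartman--Wintner type classical fact that $\{f=0,\nabla f=0\}$ is zero-dimensional in 2D is crucial here---one has that the tangent to $\gamma$ is perpendicular to $\nabla f$ (since $f\equiv 0$ on $\gamma$), while $g\equiv 0$ forces $\nabla f\parallel\zeta$, so the tangent to $\gamma$ is parallel to $\zeta^{\perp}$ throughout. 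By analyticity and connectedness, $\gamma$ is a straight-line arc in direction $\zeta^{\perp}$; by compactness, a closed one, which requires $\zeta^{\perp}$ (hence $\zeta$) to be rational. This rules out curve components when $\zeta$ is irrational, and returns us to the coprime case, giving part~(2).

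When $\zeta$ is rational, every such $\gamma$ is of the form $\{k_{1}x_{1}+k_{2}x_{2}\equiv c\bmod 1\}$, equivalently $\{z^{k_{1}}w^{k_{2}}=\alpha\}$ with $\alpha=e^{2\pi i c}$, so the corresponding factor is $R=z^{k_{1}}w^{k_{2}}-\alpha$, which is annihilated by $\partial_{\zeta^{\perp}}$ and therefore divides both $P$ and $Q$. To bound the number $N$ of such geodesic components, I would restrict $f$ to a generic closed geodesic $\ell$ of length $L=\sqrt{k_{1}^{2}+k_{2}^{2}}$ in the perpendicular direction $\zeta$: the restriction is a one-variable trigonometric polynomial of degree at most $\sqrt{n}\,L$, so has at most $2\sqrt{n}\,L$ zeros; each of the $N$ nodal geodesics in direction $\zeta^{\perp}$ meets $\ell$ transversally in $L^{2}$ points, giving $NL^{2}\le 2\sqrt{n}\,L$ and hence $N\le 2\sqrt{n}/L\le 2\sqrt{n}/h(\zeta)=\sqrt{E}/(\pi h(\zeta))$. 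Factoring out all the $R_{j}$'s leaves coprime polynomials $\tilde P,\tilde Q$ of bi-degree $(2M-Nk_{1},2M-Nk_{2})$; an expansion using $N\le 2M/L$ shows $2(2M-Nk_{1})(2M-Nk_{2})\le 8M^{2}\le 2E/\pi^{2}$, bounding the isolated points off the geodesics, and part (3) follows immediately from (1)--(2). The main obstacle I anticipate is the geometric step identifying algebraic common factors with closed geodesics orthogonal to $\zeta$, as it requires the nonexistence of $1$-dimensional components of the singular nodal set plus a clean passage from ``tangent parallel to $\zeta^{\perp}$ almost everywhere'' to ``straight line.''
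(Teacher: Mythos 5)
Your overall strategy mirrors the paper's: pass to Laurent/honest polynomials, split the common zeros of $f$ and $\partial_{\zeta^\perp}f$ according to whether the associated polynomials share a factor, use B\'ezout for the isolated points, and identify the non-trivial common factors with closed geodesics orthogonal to $\zeta$. But the crucial step --- showing that any ``curve'' component of the common zero locus must be a geodesic in direction $\zeta^\perp$ --- is argued quite differently, and your version is arguably more transparent. The paper proceeds purely algebraically: it shows (Lemmas~\ref{lem:nonsing no mult}--\ref{lem:P=phixiP xi rat,P}) that any irreducible factor $P$ of $\gcd(G,Q_\xi)$ vanishing at a nonsingular nodal point must satisfy $D_\xi P = tP$, and then classifies the solutions as binomials $p_1z_1^{k_1}+p_2z_2^{k_2}$, forcing $\xi$ rational. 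You instead argue geometrically: on a $1$-dimensional component $\gamma$ of $\{R=0\}\cap\Tb^2$ one has $f\equiv 0$ and $\langle\nabla f,\zeta^\perp\rangle\equiv 0$, so away from the discrete singular set the tangent to $\gamma$ is both orthogonal to $\nabla f$ and forced (by $\nabla f\parallel\zeta$) to be parallel to $\zeta^\perp$; analyticity then makes $\gamma$ a straight closed geodesic, which requires rationality. Your geodesic count via transversal intersection with a perpendicular geodesic $\ell$ is also a genuinely different (and cleaner) argument than the paper's $\deg(D)/h(\zeta)$ bound, and it delivers the stated constant $\sqrt{E}/(\pi h(\zeta))$ directly, where the paper's degree argument actually overshoots by $\sqrt{2}$. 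Finally, you use bidegree B\'ezout on $\mathbb{P}^1\times\mathbb{P}^1$ where the paper uses total-degree B\'ezout; both give $8n = 2E/\pi^2$.

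Two loose ends in your write-up. First, you pass from ``$R$ is a common irreducible factor'' directly to discussing $1$-dimensional components of $\{R=0\}\cap\Tb^2$, without observing that $\{R=0\}\cap\Tb^2$ could be zero-dimensional (or contain isolated points in addition to a curve). What saves you is the implicit argument --- present in the paper as Lemma~\ref{lem:nonsing no mult} --- that at a \emph{nonsingular} nodal point $x_0$ only one irreducible factor of $P$ can vanish; hence near such an $x_0$ the zero set of that one factor on $\Tb^2$ coincides with the nodal curve $f^{-1}(0)$ and is genuinely $1$-dimensional. You should make this explicit, since it is what rules out $R$ contributing isolated nonsingular points. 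Second, your claim that ``factoring out all the $R_j$'s leaves coprime $\tilde P, \tilde Q$'' is imprecise: there may remain a common factor $E$ whose zero set on $\Tb^2$ consists only of \emph{singular} nodal points (exactly the $E(z)$ in the paper's decomposition \eqref{eq:D=prod(Pi) E}). This does not break the theorem, since such points are explicitly permitted, but it does mean the pair you feed into B\'ezout should be $A=P/\gcd(P,Q)$ and $B=Q/\gcd(P,Q)$ --- the genuinely coprime parts --- rather than $P$ divided only by the geodesic factors.
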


The proof of Theorem \ref{thm:upper bnd class}, given in section \ref{sec:det bnd proof} below,
is sufficiently robust to apply verbatim to the more general family of
trigonometric polynomials on $\Tb^{2}$ of degree $\le \sqrt{E}$.
We note that 
it is possible to construct Laplace eigenfunctions $f$ of arbitrarily high eigenvalues and $\zeta\in\Sc^{1}$ such that
$N_{\zeta}(f)=0$ vanishes, so that a general lower bound for $N_{\zeta}(f)$ cannot exist. For example,
$$f(x,y)=2 \cos(2\pi\cdot m x) + \cos(2\pi\cdot m y)$$
has eigenvalue $E=4\pi^2m^2$ and satisfies $N_{\zeta}(f)=0$ for $\zeta = e^{i\theta}$ with $\theta$ near $\pi/2$, see
Figure \ref{fig:vertical nodal}.

\begin{figure}[ht]
\centering
\includegraphics[height=60mm]{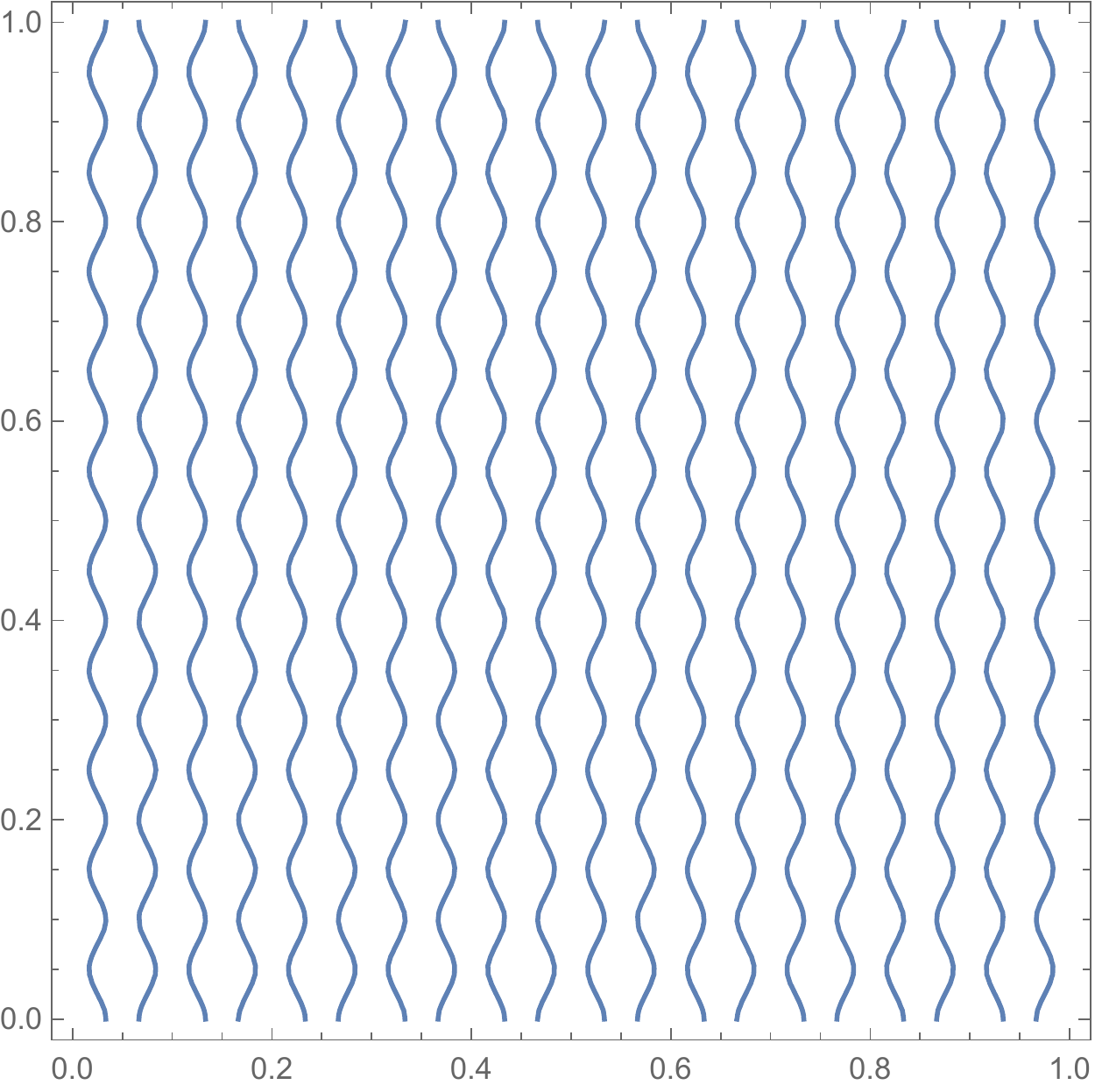}
\caption{The nodal line of $f(x,y)=2\cos(2\pi \cdot 10x)+ \cos(2\pi\cdot 10 y)$. For the choice
$\zeta = e^{i\pi/2}$ we have $N_{\zeta}(f)=0$.}
\label{fig:vertical nodal}
\end{figure}

\subsection{Expected number for arithmetic random waves}

A better understanding of several properties of nodal lines is obtained if one studies {\em random} eigenfunctions.
In 1962, Swerling \cite{Swerling} studied statistical properties of contour lines of a general class of planar Gaussian processes, and gave a non-rigorous computation of the expected value of $N_\zeta$ for general contour lines,  using the result to bound the number of closed connected components of contour lines.
 We will compute the expected value of $N_\zeta$ for
``arithmetic random waves" \cite{ORW2007, RW2008}.
These  are random eigenfunctions on the torus,
\begin{equation}
\label{eq:f arith rand wav def}
f(x) =f_{n}(x)= \sum\limits_{\lambda\in \Ec_{n}}c_{\lambda}e(\langle \lambda, x\rangle),
\end{equation}
where $e(z)=e^{2\pi i z}$ and
\begin{equation}
\label{eq:Ecn lattice circle def}
\Ec_{n}=\{\lambda=(\lambda_{1},\lambda_{2})\in\Z^{2}: \:\|\lambda\|^{2}=n\}
\end{equation}
is the set of all representations of the integer $n=\lambda_1^2+\lambda_2^2$
as a sum of two integer squares, and $c_\lambda$ are standard Gaussian random variables\footnote{After understanding the Gaussian case, one may try non-Gaussian ensembles, see e.g. \cite{CNNV}.}, identically distributed and independent save for the constraint
\begin{equation}
\label{eq:c(-lam),c(lam)}
c_{-\lambda} = \overline{c_{\lambda}}\;,
\end{equation}
making $f_n$ real valued eigenfunctions of the Laplacian with eigenvalue
\begin{equation}
\label{eq:E=4pi^2n}
E=4\pi^2n
\end{equation}
for every choice of the coefficients $\{c_{\lambda}\}_{\lambda\in\Ec_{\lambda}}$
(i.e. for every sample point).

Equivalently $f_{n}:\Tb^{2}\rightarrow\R$
is a centred Gaussian random field with covariance
\begin{equation}
\label{eq:r covar def}
r(x,y)=r_{n}(y-x) = \frac{1}{N_{n}}\sum\limits_{\lambda\in\Ec_{n}}e(\langle \lambda,y-x \rangle).
\end{equation}
Since $r(x,y)$ depends only on $y-x$, the random field $f_{n}$ is {\em stationary}, meaning that
for every translation $$\tau_{z}:f_{n}(\cdot)\mapsto f_{n}(\cdot+z)$$ with $z\in\Tb^{2}$, the law
of $\tau_{z} f_{n}$ equals the law of $f_{n}$:
\begin{equation}
\label{eq:stat law}
\tau_{z} f_{n} \stackrel{d}{=} f_{n}.
\end{equation}
This, in turn, is equivalent to the law
of the Gaussian multivariate vector $(f_{n}(x_{1}),\ldots, f_{n}(x_{k}))$ being equal to the
law of the vector $(f_{n}(x_{1}+z),\ldots, f_{n}(x_{k}+z))$ for every $x_{1},\ldots, x_{k}\in\Tb^{2}$,
$z\in\Tb^{2}$.

In \cite{RW2008} we studied the statistics of the length of the nodal line of $f_{n}$.
Since then, very refined data has been obtained on the nodal structure of such random eigenfunctions
(see e.g. \cite{K-K-W, MPRW, PR, Rozenshein, KW}).

We will compute the expected value of $N_\zeta$ for arithmetic random waves. The answer depends on the distribution of lattice points on the circle of radius $\sqrt{n}$.
Let $\mu_{n}$ be the atomic measure on the unit circle given by
\begin{equation*}
\mu_{n}=\frac{1}{r_{2}(n)}\sum\limits_{\lambda\in\Ec_{n}} \delta_{\lambda/\sqrt{n}},
\end{equation*}
where $r_{2}(n):=\#\Ec_{n}$, and let
\[
\widehat{\mu_{n}}(k) 
=\frac{1}{r_{2}(n)}\sum\limits_{\lambda=(\lambda_1,\lambda_2)\in\Ec_{n}} \left(\frac{\lambda_1+i\lambda_2}{\sqrt{n}}\right)^k\in\R
\]
be its Fourier coefficients.
\begin{theorem}
\label{thm:E[Nxi] d=2}
For $\zeta=e^{i\theta} \in S^1$, the expected value of $N_\zeta(f)$ for the arithmetic random wave \eqref{eq:f arith rand wav def} is
\begin{equation}
\label{eq:E[Nxi] d=2}
\E[N_{\zeta}] = \frac{1}{\sqrt{2}}n\cdot \left(1+\widehat{\mu_{n}}(4)\cdot \cos(4\theta)\right)^{1/2}.
\end{equation}
\end{theorem}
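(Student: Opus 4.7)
The natural tool is Kac--Rice applied to the $\R^{2}$-valued stationary centred Gaussian field
$$F(x) := \bigl(f(x),\, \langle\nabla f(x),\zeta^{\perp}\rangle\bigr)$$
on $\Tb^{2}$, whose zero set is (up to the a.s.\ negligible singular nodal points) precisely the set counted by $N_{\zeta}(f)$. Since $|\Tb^{2}|=1$ and the field is stationary, Kac--Rice reduces to the pointwise identity
$$\E[N_{\zeta}(f)] = \E\bigl[|\det JF(0)| \,\big|\, F(0) = 0\bigr] \cdot \phi_{F(0)}(0,0),$$
with $\phi_{F(0)}$ the Gaussian density of $F(0)$. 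Working in coordinates $(u,v)$ aligned with $(\zeta,\zeta^{\perp})$, we have $F = (f, f_{v})$ and $\det JF = f_{u}f_{vv} - f_{v}f_{uv}$, which on $\{F=0\}$ collapses to $f_{u}\cdot f_{vv}$.

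Next I will analyse the joint law of the Gaussian quadruple $(f, f_{v}, f_{u}, f_{vv})$ at a single point. Two symmetries will be decisive: evenness of $r$, which kills all odd-order derivatives at the origin so that the only nonzero pairwise covariance in the quadruple is $\mathrm{Cov}(f, f_{vv})$; and invariance of $\Ec_{n}$ under rotation by $\pi/2$, which forces $\widehat{\mu_{n}}(k) = 0$ for $k\not\equiv 0 \pmod 4$ and yields $\tfrac{1}{r_{2}(n)}\sum_{\lambda\in\Ec_{n}}\lambda_{u}\lambda_{v} = 0$ as well as $\tfrac{1}{r_{2}(n)}\sum\lambda_{u}^{2} = \tfrac{1}{r_{2}(n)}\sum\lambda_{v}^{2} = n/2$. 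Consequently $f_{u}$ is independent of $(f,f_{v},f_{vv})$, and conditionally on $F=0$ the product decouples as
$$\E\bigl[|f_{u}\cdot f_{vv}| \,\big|\, F = 0\bigr] = \E|f_{u}|\cdot \E\bigl[|f_{vv}| \,\big|\, f = 0\bigr].$$
The second factor comes from a one-dimensional Gaussian regression with conditional variance $\mathrm{Var}(f_{vv}) - \mathrm{Cov}(f,f_{vv})^{2}/\mathrm{Var}(f)$, an expression involving $M_{v}:= \tfrac{1}{r_{2}(n)}\sum_{\lambda\in\Ec_{n}}\lambda_{v}^{4}$.

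The final step is to convert $M_{v}$ into $\widehat{\mu_{n}}(4)$. Parametrising $\lambda\in\Ec_{n}$ by its angle $\phi_{\lambda}$, so that $\lambda_{v}=\sqrt{n}\sin(\phi_{\lambda}-\theta)$, I expand $\sin^{4}\alpha = \tfrac{1}{8}(3 - 4\cos 2\alpha + \cos 4\alpha)$; integrating against $\mu_{n}$ and using $\widehat{\mu_{n}}(2)=0$ gives $M_{v} = \tfrac{n^{2}}{8}\bigl(3 + \widehat{\mu_{n}}(4)\cos 4\theta\bigr)$. Combining the half-moments $\E|X|=\sigma\sqrt{2/\pi}$ (one from $|f_{u}|$ and one from $|f_{vv}|$ after conditioning) with $\phi_{F(0)}(0,0)$ and collecting constants then yields the stated formula. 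The calculation is largely mechanical; the main conceptual point --- rather than a serious obstacle --- is recognising that the $\pi/2$-rotational symmetry of $\Ec_{n}$ is precisely what kills all unwanted cross-covariances and isolates $\widehat{\mu_{n}}(4)\cos 4\theta$ as the sole $\theta$-dependent term in the answer.
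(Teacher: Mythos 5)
Your proposal is correct and is essentially the same Kac--Rice argument as the paper's, but executed in rotated coordinates $(u,v)$ aligned with $(\zeta,\zeta^\perp)$, which is a genuine streamlining. The paper works throughout in the standard coordinates $(x_1,x_2)$: it substitutes $f=-\tfrac{1}{4\pi^2 n}(f_{11}+f_{22})$ to trade the conditioning variable $f$ for $f_{11}+f_{22}$, then introduces normalised variables $X_1,\dots,X_5$, forms a $5\times 5$ covariance matrix, and performs a $3|2$ Gaussian regression to isolate the relevant conditional law --- a fair amount of bookkeeping with $\xi_1,\xi_2$. Your rotated frame collapses the Jacobian on $\{F=0\}$ directly to $f_u f_{vv}$, and the observation that $f_u$ decouples from $(f,f_v,f_{vv})$ (via evenness of $r$ together with $\sum_{\lambda}\lambda_u\lambda_v=0$, the latter requiring the reflection/rotational symmetry of $\Ec_n$, not evenness alone --- a small imprecision in your narration but not in the argument) reduces everything to a single one-dimensional regression of $f_{vv}$ on $f$. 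Your computation of $M_v=\tfrac{1}{r_2(n)}\sum\lambda_v^4=\tfrac{n^2}{8}(3+\widehat{\mu_n}(4)\cos 4\theta)$ via the $\sin^4$ identity and $\widehat{\mu_n}(2)=0$ is exactly the rotated-frame analogue of the paper's Lemma~\ref{lem:sum lambda1^4,lambda1^2lambda2^2}, and it correctly produces the conditional variance $2\pi^4 n^2(1+\widehat{\mu_n}(4)\cos 4\theta)$. One caution on the step you wave through as ``collecting constants'': carried out literally, with $\E|f_u|=\sqrt{2\pi^2 n}\cdot\sqrt{2/\pi}$, $\E[|f_{vv}|\mid f=0]=\pi^2 n\sqrt{2(1+\widehat{\mu_n}(4)\cos 4\theta)}\cdot\sqrt{2/\pi}$, and $\phi_{F(0)}(0,0)=(2^{3/2}\pi^2\sqrt n)^{-1}$, your decomposition yields $\E[N_\zeta]=\sqrt 2\, n\,(1+\widehat{\mu_n}(4)\cos 4\theta)^{1/2}$, i.e.\ twice the displayed right-hand side of \eqref{eq:E[Nxi] d=2}; a direct check at $n=1$, $\theta=0$ (where $f$ is proportional to $A_1\cos(2\pi x_1+\psi_1)+A_2\cos(2\pi x_2+\psi_2)$ and $N_\zeta$ is $4$ or $0$ according as $A_1\gtrless A_2$, so $\E[N_\zeta]=2$, not $1$) confirms the larger constant. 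So you should not expect your constants to land on the paper's stated coefficient; the structure of your argument is sound and the result it actually produces is what the Kac--Rice density gives.
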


The statement \eqref{eq:E[Nxi] d=2} of Theorem \ref{thm:E[Nxi] d=2} is valid even if the r.h.s. of \eqref{eq:E[Nxi] d=2} vanishes, i.e.
if $$\widehat{\mu_{n}}(4)\cdot \cos(4\theta)=-1:$$ either
$$\mu_{n} = \frac{1}{4}\left(\delta_{\pm 1}+\delta_{\pm i}\right)$$
(``Cilleruelo measure") and $\theta = \pm\frac{\pi}{4},\pm \frac{3\pi}{4}$, or $\mu_{n}$ is the rotation by $\frac{\pi}{4}$ of the latter measure
(``tilted Cilleruelo") and $\zeta$ is parallel to one of the axes.
These cases are exceptional in the following sense: It is known ~\cite{NS,KW} that for every probability measure $\mu$ on the unit circle $\Sc^{1}$ there exists a constant $c_{NS}(\mu)\ge 0$ (the ``Nazarov-Sodin constant")
such that if the measures $\mu_n$ converge weak-$*$ to $\mu$, 
then the expectation of the number $\Cc(f_{n})$ of nodal domains of $f_n$ is
\begin{equation*}
\E[\Cc(f_{n})] = (c_{NS}(\mu)+o(1))\cdot n.
\end{equation*}
Moreover, the Nazarov-Sodin constant $c_{NS}(\mu)=0$ vanishes, if and only if $\mu$ is  one of these exceptional measures
\cite{KW}. In that case it was shown \cite{KW} that
most of the nodal components are long and mainly parallel to one of the axes (perhaps, after rotation by $\frac{\pi}{4}$);
with accordance to the above, our computation \eqref{eq:E[Nxi] d=2} implies in particular that $c_{NS}(\mu)=0$ for
$\mu$ (tilted) Cilleruelo measure, i.e. the ``if" part of the aforementioned statement from ~\cite{KW}.


\bigskip

One can study an analogous quantity $N_\zeta(f)$ for eigenfunctions on the $d$-dimensional torus $\Tb^d = \R^d/\Z^d$,
$d\ge 3$ with eigenvalue $4\pi^2n$. We can establish a result analogous to Theorem \ref{thm:E[Nxi] d=2} in the higher dimensional
case, showing that for $d\geq 3$,
\begin{equation*}
\E[N_{\zeta}] \sim  C_{d} n^{d/2}  ,\quad n\rightarrow\infty,
\end{equation*}
for some positive constant $C_{d}>0$ independent of $\zeta$, assuming that $n\neq 0,4,7 \bmod 8$ if $d=3$, and $n\neq 0\bmod 8$ if $d=4$.

\subsection*{Acknowledgements}
We thank Jerry Buckley, Suresh Eswarathasan, Manjunath Krishnapur, Mark Shusterman and Mikhail Sodin for their comments.
The work was supported  by the European Research Council under the European Union's Seventh
Framework Programme (FP7/2007-2013)/ERC grant agreement n$^{\text{o}}$~320755 (Z.R.) and  n$^{\text{o}}$~335141 (I.W.).

\section{Deterministic upper bound: proof of Theorem \ref{thm:upper bnd class}}

\label{sec:det bnd proof}

Before giving a proof for Theorem \ref{thm:upper bnd class} we will need some preparatory results, all related to
the identification of the trigonometric polynomials on $\Tb^{2}$ with Laurent polynomials in $\C[z_{1},z_{2}]$,
via the natural embedding $\Tb^{2}=\Sc^{1}\times \Sc^{1} \hookrightarrow\C^{2}$ (see \eqref{eq:Tb->C2 embed} below).

\subsection{From trigonometric polynomials to (Laurent) polynomials}

\begin{definition}

\begin{enumerate}

\item Let $\Pc$ be the space of all complex valued trigonometric polynomials on $\Tb^{2}$.
We define an operator $\Phi:\Pc\rightarrow\C[z_{1},z_{2},z_{1}^{-1},z_{2}^{-1}]$ between $\Pc$ and the complex Laurent polynomials
in the following way. For $g:\Tb^{2}\rightarrow\R$ a trigonometric polynomial
\begin{equation}
\label{eq:g=trig pol}
g(x)=\sum\limits_{\substack{\lambda\in\Z^{2}\\\text{finite sum}} }c_{\lambda}e^{2\pi i \langle \lambda, x \rangle},
\end{equation}
we associate the Laurent polynomial
$\widetilde{G}=\Phi(g)\in \C[z_{1},z_{1}^{-1},z_{2},z_{2}^{-1}]$ via the embedding $\Tb^{2}=\Sc^{1}\times \Sc^{1} \hookrightarrow\C^{2}$
\begin{equation}
\label{eq:Tb->C2 embed}
(x_{1},x_{2})\mapsto (z_{1},z_{2})=\left(e^{2\pi i x_{1}}, e^{2\pi i x_{2}}\right),
\end{equation}
or, explicitly,
\begin{equation*}
\widetilde{G}(z) = g(x) = \sum\limits_{\lambda\in\Z^{2}}c_{\lambda}z^{\lambda},
\end{equation*}
where for $z=(z_{1},z_{2})\in\C^{2}$ and $\lambda\in\Ec_{n}$ we denote $z^{\lambda}:=z_{1}^{\lambda_{1}}\cdot z_{2}^{\lambda_{2}}$.

\item For $k=1,2$ let $D_{k}:\C[z_{1},z_{2}] \rightarrow\C[z_{1},z_{2}]$ be the operator
$$D_{k}:p(z)\mapsto z_{k}\frac{\partial p(z)}{\partial z_k} .
$$

\item For $\xi\in\Sc^{1}$ denote the operator
$$D_{\xi} = \langle (D_{1},D_{2}), \xi\rangle = \xi_{1}D_{1}+\xi_{2}D_{2}.$$

\end{enumerate}

\end{definition}

The following properties are immediate from the definitions:

\begin{lemma}
\label{lem:dir der corr pol}

\begin{enumerate}

\item

For every $\xi\in\Sc^{1}$ the operator $D_{\xi}$ (in particular, $D_{1}$ and $D_{2}$) is a derivation, i.e. it is a linear operator
satisfying the Leibnitz law
$$D_{\xi}(p(z)q(z)) = D_{\xi}p(z)\cdot q(z)+p(z)\cdot D_{\xi}q(x).$$

\item For every $g$, a trigonometric polynomial as in \eqref{eq:g=trig pol}, and $x=(x_{1},x_{2})\in \Tb^{2}$, we have
\begin{equation}
\label{eq:g(x)=G(z)}
g(x) = (\Phi g)(z) = \widetilde{G}(z),
\end{equation}
where $z=z(x)$ is given by \eqref{eq:Tb->C2 embed} and $\widetilde{G}=\Phi g$.

\item
For $\xi\in\Sc^{1}$, if $\widetilde{G}=\Phi g$, then
\begin{equation}
\label{eq:der corr derivation}
\frac{1}{2\pi i}\Phi(\partial_{\xi}g) = D_{\xi} \widetilde{G},
\end{equation}
i.e. if under $\Phi$, $g$ maps to $g\mapsto \widetilde{G}$, then its (normalised) directional derivative
$\frac{1}{2\pi i}\partial_{\xi}g$ maps to $D_{\xi}\widetilde{G}$.

\end{enumerate}

\end{lemma}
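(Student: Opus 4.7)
The plan is to verify the three items by unwinding the definitions of $\Phi$, $D_k$, and the embedding \eqref{eq:Tb->C2 embed}; no deep input is needed beyond a careful bookkeeping of monomials.

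For part (1), I would first observe that the standard partial derivative $\partial/\partial z_{k}$ is a derivation on $\C[z_{1},z_{2}]$, and that multiplying a derivation by a fixed element of the ring still produces a derivation on the left: indeed, for any $p,q$,
\begin{equation*}
D_{k}(pq)=z_{k}\bigl(\partial_{z_{k}} p\cdot q+p\cdot\partial_{z_{k}}q\bigr)=(z_{k}\partial_{z_{k}}p)\, q+p\,(z_{k}\partial_{z_{k}}q)=D_{k}p\cdot q+p\cdot D_{k}q.
\end{equation*}
Since a $\C$-linear combination of derivations is again a derivation, $D_{\xi}=\xi_{1}D_{1}+\xi_{2}D_{2}$ is a derivation for every $\xi\in\Sc^{1}$. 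The same reasoning extends to the Laurent polynomial ring, which is what we actually need.

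For part (2), the formula $g(x)=\widetilde{G}(z(x))$ is immediate from the definition of $\Phi$: substituting $z_{k}=e^{2\pi i x_{k}}$ into $z^{\lambda}=z_{1}^{\lambda_{1}}z_{2}^{\lambda_{2}}$ recovers $e^{2\pi i\langle\lambda,x\rangle}$, so the term-by-term matching identifies $g$ and $\widetilde{G}\circ z$ as functions on $\Tb^{2}$.

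For part (3), the core of the argument is to compute both sides on a single monomial and use linearity. On the one hand, from the explicit form \eqref{eq:g=trig pol},
\begin{equation*}
\partial_{\xi}g(x)=\sum_{\lambda}c_{\lambda}\,2\pi i\,\langle\lambda,\xi\rangle\, e^{2\pi i\langle\lambda,x\rangle},
\end{equation*}
so $\Phi\bigl(\tfrac{1}{2\pi i}\partial_{\xi}g\bigr)=\sum_{\lambda}c_{\lambda}\langle\lambda,\xi\rangle z^{\lambda}$. On the other hand, the chain rule for monomials gives $D_{k}z^{\lambda}=z_{k}\partial_{z_{k}}(z_{1}^{\lambda_{1}}z_{2}^{\lambda_{2}})=\lambda_{k}z^{\lambda}$, hence $D_{\xi}z^{\lambda}=\langle\lambda,\xi\rangle z^{\lambda}$, and summing over $\lambda$ yields the same expression $\sum_{\lambda}c_{\lambda}\langle\lambda,\xi\rangle z^{\lambda}$. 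Equating the two proves \eqref{eq:der corr derivation}.

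There is no real obstacle here; the only point requiring attention is the normalization factor $\frac{1}{2\pi i}$, which is forced by the chain rule $\partial_{x_{k}}e^{2\pi i x_{k}}=2\pi i\, z_{k}$ when translating an honest $x$-derivative into the multiplicative operator $z_{k}\partial_{z_{k}}$. Once the conventions are fixed, the identity is a monomial-by-monomial check.
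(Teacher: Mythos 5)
Your proof is correct, and it takes the only natural route: the paper itself states that these properties are ``immediate from the definitions'' and offers no written proof, so you are simply filling in the term-by-term monomial verification that the authors leave to the reader.
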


\subsection{Auxiliary lemmas}

\begin{lemma}
\label{lem:nonsing no mult}
Let $g:\Tb^{2}\rightarrow\R$ be
a trigonometric polynomial
\eqref{eq:g=trig pol}, $x_{0}\in g^{-1}(0)$ a nonsingular zero,
$\widetilde{G}=\Phi(g)\in\C[z_{1},z_{1}^{-1},z_{2},z_{2}^{-1}]$,
and $G(z)=z^{\delta}\widetilde{G}(z) \in \C[z_{1},z_{2}]$, so that
$$G(z_{0}) = g(x_{0}) = 0,$$ where $z_{0}=z(x_{0})\in\C^{2}$ is the point corresponding to $x_{0}$
via \eqref{eq:Tb->C2 embed}. Suppose also that $P\mid G$ is an irreducible factor of $G$ such that $P(z_{0}) = 0$.
Then $P^{2}\nmid G$.
\end{lemma}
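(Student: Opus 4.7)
The plan is to argue by contradiction: assume $P^2 \mid G$ and deduce that $\nabla g(x_0) = 0$, contradicting the hypothesis that $x_0$ is nonsingular. The entire argument is a careful bookkeeping between three versions of the same object: the trigonometric polynomial $g$, the Laurent polynomial $\widetilde G = \Phi(g)$, and the genuine polynomial $G = z^\delta \widetilde G$.

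First I would extract the consequence of double divisibility at the algebraic level: writing $G = P^2 H$ for some $H \in \C[z_1,z_2]$ and applying the Leibniz rule gives
\begin{equation*}
\partial_{z_k} G = 2 P \cdot (\partial_{z_k} P) \cdot H + P^2 \cdot \partial_{z_k} H, \qquad k=1,2,
\end{equation*}
so $P \mid \partial_{z_k} G$. Evaluating at $z_0$ and using $P(z_0)=0$ yields $\partial_{z_k} G(z_0) = 0$ for $k=1,2$.

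Next I would transfer this vanishing to $\widetilde G$. Since $\widetilde G(z) = z^{-\delta} G(z)$, the Leibniz rule gives
\begin{equation*}
\partial_{z_k} \widetilde G(z) = -\delta_k\, z_k^{-1} z^{-\delta} G(z) + z^{-\delta} \partial_{z_k} G(z).
\end{equation*}
At $z_0$, the first term vanishes because $G(z_0)=g(x_0)=0$, and the second vanishes by the step above. Thus $\partial_{z_k}\widetilde G(z_0) = 0$ for $k=1,2$.

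Finally I would translate to the trigonometric polynomial $g$ using part (3) of Lemma~\ref{lem:dir der corr pol}. Applying \eqref{eq:der corr derivation} to the two coordinate directions $\xi = e_1, e_2$ and combining with \eqref{eq:g(x)=G(z)} at $x_0$,
\begin{equation*}
\frac{1}{2\pi i}\, \partial_{x_k} g(x_0) \;=\; (D_k \widetilde G)(z_0) \;=\; z_{0,k}\, \partial_{z_k}\widetilde G(z_0) \;=\; 0,
\end{equation*}
since $z_{0,k} = e^{2\pi i x_{0,k}}$ lies on the unit circle and is in particular nonzero. Hence $\nabla g(x_0)=0$, contradicting the assumption that $x_0$ is a nonsingular zero of $g$. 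There is no real obstacle in this argument; the only subtlety worth being careful about is the bookkeeping step converting $\partial_{z_k} G$ vanishing into $\partial_{z_k} \widetilde G$ vanishing, where it is essential that the image of the embedding \eqref{eq:Tb->C2 embed} lies in the algebraic torus $(\C^*)^2$ so that the monomial $z^{-\delta}$ is well defined and invertible at $z_0$.
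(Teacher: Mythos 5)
Your proof is correct and follows essentially the same route as the paper's: argue by contradiction, show $P^2\mid G$ forces the relevant derivative of $G$ to vanish at $z_0$, then transfer this via $z^{-\delta}$ back to $\widetilde G$ and through $\Phi$ to conclude $\nabla g(x_0)=0$. The only cosmetic difference is that you work with $\partial_{z_k}$ and convert to $D_k=z_k\partial_{z_k}$ at the end, whereas the paper uses $D_k$ throughout; the logical content is the same.
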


\begin{proof}
Assume by contradiction that, under the assumptions of Lemma \ref{lem:nonsing no mult}, we have that
\begin{equation}
\label{eq:P^2|G}
P^{2}\mid G
\end{equation}
we then claim that
in this case necessarily $\nabla g(x_{0}) = 0$, contradicting the non-singularity of $x_{0}$ as a zero
of $g$. We show that $\frac{\partial g}{\partial x_{k}}(x_{0}) = 0$,  $k=1,2$.

Since $D_{k}$ is a derivation in $\C[z_{1},z_{2}]$, and by \eqref{eq:der corr derivation} we have that
\begin{equation}
\label{eq:Phi(der) chain}
\Phi\left(\frac{1}{2\pi i}\frac{\partial g}{\partial_{x_{k}}}\right) = D_{k}\widetilde{G} = D_{k}(z^{-\delta}G) = z^{-\delta}\cdot D_{k}G +
G\cdot D_{k}z^{-\delta}.
\end{equation}
Since both $G$ and $D_{k}G$ are divisible by $P$ by our assumption \eqref{eq:P^2|G}, we have
$G(z_{0})=D_{k}G(z_{0}) = 0$. Substituting this into \eqref{eq:Phi(der) chain}, and bearing in mind \eqref{eq:g(x)=G(z)}, this yields that
$\frac{\partial g}{\partial x_{k}}(x_{0}) = 0$.
Thus $x_{0}$ is a singular zero of $g$, contradicting our assumption.
\end{proof}

\begin{lemma}
\label{lem:irred fact scal der}
Let $\widetilde{G}\in\C[z_{1},z_{1}^{-1},z_{2},z_{2}^{-1}]$ be a Laurent polynomial,
$\delta\in\Z^{2}_{\geq 0}$ so that
\begin{equation}
\label{eq:G=zdelt*tildG}
G(z)=z^{\delta}\widetilde{G}(z) \in \C[z_{1},z_{2}]
\end{equation}
is a polynomial, with $\delta$ minimal in  the sense that $z_j\nmid G$. Let
$\widetilde{Q}_{\xi}(z) = D_{\xi}(G)(z)$
and
\begin{equation}
\label{eq:Q=zdelt*tildQ}
Q_{\xi}(z):=z^{\delta}\cdot\widetilde{Q}_{\xi}(z)\in\C[z_{1},z_{2}].
\end{equation}
Suppose that
\begin{equation}
\label{eq:P|gcd(G,Q)}
P\mid \gcd(G,Q_{\xi})
\end{equation}
is an irreducible polynomial, such that $P^{2}\nmid G$. Then necessarily
$D_{\xi}P$ is a scalar multiple of $P$, i.e. there exists $t\in\C$ so that
\begin{equation}
\label{eq:phiP=tP}
D_{\xi}P = t\cdot P.
\end{equation}
\end{lemma}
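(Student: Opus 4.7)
The plan is to exploit the Leibniz property of $D_\xi$ (part (1) of Lemma~\ref{lem:dir der corr pol}) together with a degree comparison. First I would reduce the divisibility hypothesis in $\gcd(G,Q_\xi)$ to a divisibility of $D_\xi G$. Observe that the minimality of $\delta$ in \eqref{eq:G=zdelt*tildG} (i.e.\ $z_j\nmid G$) forces $P\neq z_1$ and $P\neq z_2$, because otherwise the irreducible $P$ would divide $G$ and give $z_j\mid G$. Hence $\gcd(P,z^\delta)=1$, and \eqref{eq:Q=zdelt*tildQ} together with $P\mid Q_\xi$ implies $P\mid D_\xi G$.

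Next, using $P^2\nmid G$, I would factor $G=P\cdot H$ in $\C[z_1,z_2]$ with $P\nmid H$. Applying the Leibniz rule to $D_\xi G$ yields
\begin{equation*}
D_\xi G \;=\; (D_\xi P)\cdot H + P\cdot (D_\xi H).
\end{equation*}
Since $P$ divides both the left-hand side and the second term on the right, it must divide $(D_\xi P)\cdot H$. Irreducibility of $P$ and $P\nmid H$ then force
\begin{equation*}
P \;\mid\; D_\xi P.
\end{equation*}

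Finally I would close the argument with a degree count. Each operator $D_k=z_k\partial_{z_k}$ acts on a monomial $z^\alpha$ by multiplication by the scalar $\alpha_k$, so $D_\xi$ preserves the support of $P$ and in particular does not increase its total degree: $\deg D_\xi P\le \deg P$. Combined with $P\mid D_\xi P$, this leaves only the possibility that the quotient is a constant, i.e.\ there exists $t\in\C$ with $D_\xi P=t\cdot P$, which is exactly \eqref{eq:phiP=tP}.

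I do not expect any real obstacle here; the only point that requires a small amount of care is the initial step where one transfers divisibility from $Q_\xi=z^\delta\widetilde{Q}_\xi$ to $D_\xi G$ itself, which relies essentially on the minimality of $\delta$ forcing $P$ to be coprime to the monomial $z^\delta$. Everything else is formal manipulation with the derivation $D_\xi$ and a one-line multivariate degree comparison.
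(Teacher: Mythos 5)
Your proof follows essentially the same route as the paper: establish $P\mid D_\xi G$, factor $G=P\cdot H$ with $P\nmid H$ (using $P^2\nmid G$ and irreducibility), apply the Leibniz rule to $G=PH$ to get $P\mid (D_\xi P)\cdot H$, conclude $P\mid D_\xi P$, and finish with the observation that $D_\xi$ preserves the support of a polynomial so $\deg D_\xi P\le\deg P$. Steps 2--4 coincide with the paper word for word.

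The one place where your reasoning diverges is the initial reduction to $P\mid D_\xi G$. You read the displayed statement literally, $\widetilde{Q}_\xi=D_\xi(G)$, so that $Q_\xi=z^\delta D_\xi G$, and then $\gcd(P,z^\delta)=1$ together with $P\mid Q_\xi$ does give $P\mid D_\xi G$. However, this appears to be a misprint in the lemma: everywhere else in the paper (in particular in the proof of Theorem~\ref{thm:upper bnd class}), $\widetilde{Q}_\xi=D_\xi(\widetilde{G})$, a genuine Laurent polynomial. Under that intended reading, $Q_\xi=z^\delta D_\xi\widetilde{G}$ is \emph{not} $z^\delta D_\xi G$, and the coprimality argument only lets you cancel $z^\delta$ in the Laurent ring; it does not directly yield $P\mid D_\xi G$ in $\C[z_1,z_2]$. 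The paper's route is the Leibniz identity
\begin{equation*}
D_\xi G \;=\; D_\xi\bigl(z^\delta\widetilde{G}\bigr) \;=\; \langle\delta,\xi\rangle\,G \;+\; Q_\xi,
\end{equation*}
which makes $P\mid D_\xi G$ immediate from the full hypothesis $P\mid\gcd(G,Q_\xi)$ — note it uses $P\mid G$ as well as $P\mid Q_\xi$, which your step does not invoke. Since $Q_\xi=D_\xi G-\langle\delta,\xi\rangle G$, the two interpretations actually produce the same $\gcd(G,Q_\xi)$ and hence the same set of admissible $P$, so your conclusion is correct; but under the intended definition of $\widetilde{Q}_\xi$ your first step needs this one-line Leibniz computation (and the extra input $P\mid G$) to be airtight.
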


\begin{proof}

First, since by Lemma \ref{lem:dir der corr pol}, $D_{\xi}$ is a derivation, we have that
\begin{equation}
\label{eq:phixiG chain}
\begin{split}
D_{\xi}G &= D_{\xi}(z^{\delta}\cdot z^{-\delta}G) = D_{\xi}(x^{\delta}\cdot \widetilde{G}) =
D_{\xi}(x^{\delta})\cdot \widetilde{G} + x^{\delta}\cdot D_{\xi}(\widetilde{G}) \\&= \langle\delta,\xi\rangle x^{\delta}\cdot\widetilde{G}
+x^{\delta}\widetilde{Q}_{\xi} = \langle\delta,\xi\rangle G + Q_{\xi},
\end{split}
\end{equation}
by \eqref{eq:G=zdelt*tildG} and \eqref{eq:Q=zdelt*tildQ}.
Hence, since, by assumption \eqref{eq:P|gcd(G,Q)}, both summands on the r.h.s. of \eqref{eq:phixiG chain} are divisible by $P$, so is $D_{\xi}G$, i.e.
\begin{equation}
\label{eq:P|phixi(G)}
P\mid D_{\xi}G.
\end{equation}

Now let us write
\begin{equation}
\label{eq:G=PA}
G=P\cdot A
\end{equation}
for some $A\in\C[z_{1},z_{2}]$; since by assumption $P$ is irreducible, and $P^{2}\nmid G$ by
Lemma \ref{lem:nonsing no mult}, this necessarily implies
\begin{equation}
\label{eq:gcd(P,A)=1}
\gcd(P,A)=1.
\end{equation}
Applying the derivation $D_{\xi}$ on \eqref{eq:G=PA}
we obtain:
\begin{equation*}
D_{\xi}G = D_{\xi}(P)\cdot A+P\cdot D_{\xi}A,
\end{equation*}
which, together with \eqref{eq:G=PA} yields that
\begin{equation*}
P\mid D_{\xi}(P)\cdot A,
\end{equation*}
which, by \eqref{eq:gcd(P,A)=1}, forces
\begin{equation}
\label{eq:P|dir der P}
P\mid D_{\xi}(P).
\end{equation}
Note that if
\begin{equation*}
P(z)=\sum\limits_{\alpha\in\Z_{\ge 0}^{2}}p_{\alpha}z^{\alpha}
\end{equation*}
is a finite sum, then
\begin{equation*}
D_{\xi}(P)(z)=\sum\limits_{\alpha\in\Z_{\ge 0}^{2}}\langle\xi,\alpha \rangle p_{\alpha}z^{\alpha}
\end{equation*}
is of degree at most  the degree of $P$. Hence \eqref{eq:P|dir der P} implies that
$D_{\xi}P$ is a scalar multiple of $P$.
\end{proof}

\begin{lemma}
\label{lem:P=phixiP xi rat,P}
Let $\xi\in\Sc^{1}$, $t\in\C$, and $P\in\C[z_{1},z_{2}]$ nonconstant irreducible polynomial such that
$z_{1},z_{2}\nmid P$, and
\begin{equation}
\label{eq:phixiP=tP}
D_{\xi}P = t\cdot P.
\end{equation}
Then the following hold:
\begin{enumerate}

\item The direction $\xi$ is rational (i.e. the vector $\xi$ is a multiple of a rational vector).

\item The polynomial $P$ is necessarily of the form
\begin{equation}
\label{eq:P=p1z1^k1+p2z2^k2}
P(z) = p_{1}z_{1}^{k_{1}}+p_{2}z_{2}^{k_{2}}
\end{equation}
for some $p_{1},p_{2}\in\C\backslash \{0\}$, and $(k_{2},k_{1})\in\Z_{\ge 0}^{2}$ is a primitive vector (unique up to sign) satisfying
$$\frac{(k_{2},k_{1})}{\|(k_{2},k_{1})\|} = \pm \xi.$$

\end{enumerate}

\end{lemma}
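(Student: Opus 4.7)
The plan is to diagonalize $D_\xi$ on monomials and read off what the eigenvalue equation \eqref{eq:phixiP=tP} says about the support of $P$. Since $D_\xi(z^\alpha) = \langle \xi, \alpha\rangle z^\alpha$, writing $P(z) = \sum_\alpha p_\alpha z^\alpha$ and comparing coefficients in $D_\xi P = tP$ forces
\[
\langle \xi, \alpha\rangle \;=\; t \qquad \text{for every } \alpha\in\supp(P).
\]
Thus the Newton polygon of $P$ collapses to (a subset of) the affine line $L_t=\{\alpha\in\R^{2}:\langle \xi,\alpha\rangle=t\}$. All subsequent work consists in squeezing as much as possible out of this observation.

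For part (1), the hypotheses $z_1,z_2\nmid P$ together with nonconstancy of $P$ force $\supp(P)$ to contain at least two distinct lattice points: a single monomial $p_\alpha z^\alpha$ can only avoid divisibility by both $z_1$ and $z_2$ if $\alpha=(0,0)$, which would make $P$ constant. Choosing $\alpha\neq\beta$ in $\supp(P)$ then produces a nonzero integer vector $\alpha-\beta\in L_0$ perpendicular to $\xi$, so $\xi$ is a real multiple of the rational vector $(\alpha-\beta)^{\perp}$.

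For part (2), let $(k_2,k_1)\in\Z^2_{\ge 0}$ be the primitive representative of the direction of $\xi$, so $\xi=\pm(k_2,k_1)/\|(k_2,k_1)\|$. Treating first the nondegenerate case $k_1,k_2\ge 1$, the integer points on $L_t$ form an arithmetic progression with step vector $\pm(k_1,-k_2)$, so the nonnegative integer solutions are precisely $\{(k_1(\ell-s),k_2 s):s=0,1,\dots,\ell\}$ for some $\ell\ge 0$. The hypotheses $z_1,z_2\nmid P$ force a monomial with $\alpha_1=0$ and one with $\alpha_2=0$, so both endpoints $(0,k_2\ell)$ and $(k_1\ell,0)$ lie in $\supp(P)$, and nonconstancy gives $\ell\ge 1$. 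The crucial step is now to exploit irreducibility: under the substitution $u=z_1^{k_1},\,v=z_2^{k_2}$, $P$ becomes a homogeneous form of degree $\ell$ in $(u,v)$, which over the algebraically closed field $\C$ splits completely into linear factors:
\[
P(z)\;=\;\prod_{j=1}^{\ell}\bigl(\alpha_j\, z_1^{k_1}+\beta_j\, z_2^{k_2}\bigr).
\]
Each factor has positive total degree in $(z_1,z_2)$ and is therefore a nonunit in $\C[z_1,z_2]$, so irreducibility of $P$ forces $\ell=1$. This yields $P=p_1 z_1^{k_1}+p_2 z_2^{k_2}$, and $p_1,p_2\neq 0$ follows once more from $z_1,z_2\nmid P$.

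The main obstacle is a clean treatment of the axis-parallel boundary case where either $k_1=0$ or $k_2=0$, since then the arithmetic-progression parametrization above degenerates and the substitution $u=z_1^{k_1}$ becomes trivial. In that regime, however, the support of $P$ is contained in a single horizontal or vertical line of lattice points, so $P$ is a univariate polynomial in $z_1$ or $z_2$ alone; irreducibility in $\C[z_1,z_2]$ then forces it to be linear in that variable, and together with $z_i\nmid P$ this gives the asserted form $p_1 z_1^{0}+p_2 z_2^{1}$ or $p_1 z_1^{1}+p_2 z_2^{0}$, matching $(k_2,k_1)=(1,0)$ or $(0,1)$. A minor additional subtlety, not strictly required by the statement but reassuring to note, is that the resulting $p_1 z_1^{k_1}+p_2 z_2^{k_2}$ is genuinely irreducible when $\gcd(k_1,k_2)=1$, which can be verified by a standard Gauss-lemma argument viewing the polynomial as an element of $\C(z_2)[z_1]$.
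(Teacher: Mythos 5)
Your proof is correct, and part~(1) is essentially identical to the paper's. For part~(2) both proofs share the same core idea: reduce $P$ to a one-variable (equivalently, homogeneous two-variable) polynomial in the ``direction'' of the line $L_t$, and use that irreducible elements of $\C[w]$ are linear. The difference is in how the reduction is set up. The paper writes $P(z)=z^{\alpha^0}Q(z^v)$ with $v$ a primitive vector orthogonal to $\xi$, which (since $v$ generally has a negative coordinate) produces a Laurent monomial $z^v$; transferring the factorization $Q=AB$ back to an honest polynomial factorization of $P$ then requires clearing denominators and tracking minimal exponents $\alpha^1,\alpha^2$. You sidestep all of that by first exploiting $z_1,z_2\nmid P$ \emph{before} factoring: it forces $\supp(P)$ to contain a point on each coordinate axis, which (using primitivity, $\gcd(k_1,k_2)=1$) pins the whole support to the lattice segment $\{(jk_1,(\ell-j)k_2):0\le j\le\ell\}$. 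Then $P$ is visibly a degree-$\ell$ homogeneous form in $u=z_1^{k_1}$, $v=z_2^{k_2}$, both genuine monomials, and $\C$-splitting gives the factorization into binomials $\alpha_j z_1^{k_1}+\beta_j z_2^{k_2}$ directly, with no Laurent bookkeeping. One small presentational remark: the phrase ``\ldots so the nonnegative integer solutions are precisely $\{(k_1(\ell-s),k_2s)\}$ for some $\ell\ge 0$'' is not justified by the arithmetic progression alone (in general the nonnegative solutions need not reach the axes, e.g.\ $3\alpha_1+2\alpha_2=5$ has only $(1,1)$); the correct order is to first note that $z_1,z_2\nmid P$ forces the two axis points into $\supp(P)$, which then forces $s'$ to be a multiple of $k_1k_2$ and yields the stated parametrization. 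Since you invoke that observation in the very next clause, the content is all there and the argument is sound; only the logical ordering would need to be tightened in a final write-up.
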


\begin{proof}

Writing $P$ as a finite sum
\begin{equation*}
P(z) = \sum\limits_{\alpha}\langle\xi,\alpha \rangle p_{\alpha}z^{\alpha},
\end{equation*}
(the finite sum over $\alpha\in\Z_{\ge 0}^{2}$), the equality \eqref{eq:phixiP=tP} is equivalent to
\begin{equation*}
\langle \xi,\alpha\rangle \cdot p_{\alpha}=t\cdot p_{\alpha}
\end{equation*}
for every $\alpha\in\Z_{\ge 0}^{2}$, i.e.
\begin{equation}
\label{eq:<xi,alpha>=t}
\langle \xi,\alpha\rangle = t
\end{equation}
for every $\alpha\in\Z_{\ge 0}^{2}$ with $p_{\alpha} \ne 0$. Note that $P$ is not a monomial
(as otherwise $P$ would be divisible by either $z_{1}$ or $z_{2}$), hence \eqref{eq:<xi,alpha>=t}
is valid for at least two distinct $\alpha$. Therefore, for these $\alpha$, one has
\begin{equation*}
\langle \xi,\alpha-\alpha'\rangle=0,
\end{equation*}
which forces $\xi$ to be {\em rational}, i.e. yields the first statement of Lemma \ref{lem:P=phixiP xi rat,P}.

\vspace{2mm}

Now assume that the rational vector $\xi=\frac{u}{\|u\|}$ is a multiple of a primitive integer vector $u\in\Z^{2}$. We may
then rewrite \eqref{eq:<xi,alpha>=t} as
\begin{equation}
\label{eq:<u,alpha>=s}
\langle u,\alpha\rangle = s,
\end{equation}
with $s=\|u\|\cdot t$, uniquely determined by $\xi$ and $t$, and to have any solution to \eqref{eq:<u,alpha>=s}, necessarily $s\in\Z$.
The integer solutions to \eqref{eq:<u,alpha>=s}, considered as an equation in $\alpha$, are
\begin{equation}
\label{eq:alph=alph0+kv}
\alpha=\alpha^{0}+k\cdot v,
\end{equation}
where $\alpha^{0}$ is a particular solution to \eqref{eq:<u,alpha>=s},
and $v\in\Z^{2}$ is the primitive integer vector orthogonal to $u$, unique up to sign, some of whose coordinates might be negative.
Note that \begin{equation*}
\zeta =  \frac{v}{\|v\|}
\end{equation*}
is a unit vector orthogonal to $\xi$.

Since the collection $$\{\alpha\in\Z^{2}:\: p_{\alpha}\ne 0\}$$ is finite (corresponding to a finite collection of $k$ in
\eqref{eq:alph=alph0+kv}), we can choose $\alpha^{0}$ a particular solution of
\eqref{eq:<u,alpha>=s} so that
\begin{equation}
\label{eq:palpha0 ne 0}
p_{\alpha^{0}}\ne 0,
\end{equation}
and the numbers $k$ in \eqref{eq:alph=alph0+kv} satisfy $0\le k\le K$ for some
$K>0$; by \eqref{eq:palpha0 ne 0} we necessarily have $\alpha_{0}\in\Z_{\ge 0 }^{2}$. We may then write:
\begin{equation}
\label{eq:P(z)=xalpha0*Q(zv)}
P(z) = \sum\limits_{k=0}^{K}p_{k}z^{\alpha^{0}+k\cdot v} = z^{\alpha^{0}}\cdot \sum\limits_{k=0}^{K}(z^{v})^{k} =
z^{\alpha^{0}}\cdot Q(z^{v}),
\end{equation}
where $Q(w)\in\C[w]$ is a (one variable) complex polynomial, which, by above, is not a monomial.

We claim that the irreducibility of $P$ implies the irreducibility of $Q$, which, in turn, implies that $Q$ is {\em linear}.
For if $Q$ were reducible, we could write
\begin{equation}
\label{eq:Q=AB}
Q(w)=A(w)\cdot B(w)
\end{equation}
for some nonconstant polynomials $A,B\in\C[w]$. Substituting \eqref{eq:Q=AB} into \eqref{eq:P(z)=xalpha0*Q(zv)}, we obtain
\begin{equation}
\label{eq:P=zalphaAB}
P(z) = z^{\alpha^{0}} \cdot A(z^{v})B(z^{v}).
\end{equation}
As one or both components of $v$ might be negative, \eqref{eq:P=zalphaAB}
does not immediately imply that $P$ is reducible. Write
\begin{equation*}
A(z^{v}) = z^{-\alpha^{1}}\widetilde{A}(z),
\end{equation*}
\begin{equation*}
B(z^{v}) = z^{-\alpha^{2}}\widetilde{B}(z),
\end{equation*}
where $\alpha^{1},\alpha^{2}\in\Z_{\ge 0}^{2}$ are minimal so that $\widetilde{A}(z), \widetilde{B}(z)\in\C[z]$ are polynomial,
so that $\widetilde{A},\widetilde{B}$ are not divisible by $z_{1},z_{2}$.
We then have
\begin{equation}
\label{eq:P=za0-a1-a2AB}
P(z) = z^{\alpha^{0}-\alpha^{1}-\alpha^{2}} \cdot \widetilde{A}(z)\widetilde{B}(z).
\end{equation}
Since $P$ is not divisible by $z_{1},z_{2}$ and neither are $A$ and $B$, the equality \eqref{eq:P=za0-a1-a2AB} implies that
$\alpha^{0}-\alpha^{1}-\alpha^{2} = 0$, so that
\begin{equation*}
P(z)=\widetilde{A}(z)\cdot\widetilde{B}(z)
\end{equation*}
is a factorization of $P$ into nonconstant polynomials, contradicting the assumption that $P$ is irreducible, and hence
$Q$ as in \eqref{eq:P(z)=xalpha0*Q(zv)} is itself irreducible in $\C[w]$, so
\begin{equation}
\label{eq:Q=q0+q1w}
Q(w)=q_{0}+q_{1}w
\end{equation}
with $q_{0},q_{1}\in \C^{*}$, is linear.

Substituting  \eqref{eq:Q=q0+q1w} into \eqref{eq:P(z)=xalpha0*Q(zv)} gives
\begin{equation}
\label{eq:P=q0za0+q1za0+v}
P(z) = q_{0}z^{\alpha_{0}} + q_{1}z^{\alpha_{0}+v},
\end{equation}
and $\alpha_{0},\alpha_{0}+v \in \Z_{\ge 0 }^{2}$. Since $\alpha_{0}\ne \alpha_{0}+v$ and $z_{1},z_{2}\nmid P$,
the form \eqref{eq:P=q0za0+q1za0+v} of $P$ reduces to \eqref{eq:P=p1z1^k1+p2z2^k2}, and it also forces
\begin{equation*}
v = (-k_{1},k_{2}),
\end{equation*}
hence $(k_{2},k_{1})$ is a primitive lattice point of $\Z^{2}$, co-linear with $\xi$.
\end{proof}

\subsection{Proof of Theorem \ref{thm:upper bnd class}}

\begin{proof}

Let $f=f_{n}$ be 
a
toral eigenfunction \eqref{eq:f arith rand wav def} (it is a monochromatic trigonometric polynomial whose frequency set
$\Ec_{n}$ is given by \eqref{eq:Ecn lattice circle def}), and
$$\widetilde{G}=\Phi(f)\in\C[z_{1},z_{1}^{-1},z_{2},z_{2}^{-1}]$$ be the Laurent polynomial associated to $f$ as in
Lemma \ref{lem:dir der corr pol}, so that
\begin{equation}
\label{eq:G(z)=g(x0)}
\widetilde{G}(z) = f(x) = \sum\limits_{\lambda\in\Ec_{n}}c_{\lambda}z^{\lambda} .
\end{equation}
Note that for $\lambda\in \Ec_{n}$, we have $|\lambda_1|+\lambda_2|\leq \sqrt{2n}$.
To make $\widetilde{G}$ into a polynomial in $\C[z_{1},z_{2}]$ we multiply $\widetilde{G}$ by a monomial $z^{\delta}$ with $\delta\in\Z_{\ge 0}^{2}$ satisfying
\begin{equation}
\label{eq:|delta|<=sqrt(E)}
\delta_{1}+\delta_{2} \le \sqrt{2n},
\end{equation}
to write
\begin{equation}
\label{eq:F=zdelta*tilde(F)}
G(z)=z^{\delta}\widetilde{G}(z),
\end{equation}
with $\delta$ minimal, so that, in particular, $G(z)$ is {\em not divisible} by $z_{1}$  or $z_{2}$.
By \eqref{eq:f arith rand wav def} and \eqref{eq:|delta|<=sqrt(E)},
we have
\begin{equation}
\label{eq:deg(G)<=2sqrt(E)}
\deg (G) \le 2\sqrt{2}\cdot\sqrt{n}.
\end{equation}
Now let $\widetilde{Q}_{\xi}=\frac{1}{2\pi i}\Phi(\partial_{\xi}f)$ be the Laurent polynomial corresponding to the directional
derivative $\partial_{\xi}f(x)$ of $f$ where $\xi=\zeta^\perp$ is orthogonal to $\zeta$. By Lemma \ref{lem:dir der corr pol} we have
\begin{equation}
\label{eq:Qtild=der G}
\widetilde{Q}_{\xi}(z) = D_{\xi}(\widetilde{G}(z)) = \frac{1}{2\pi i}\Phi(\partial_{\xi}f)(z)=
\sum\limits_{\lambda\in\Ec_{n}}\langle \lambda,\xi\rangle c_{\lambda}z^{\lambda},
\end{equation}
and
\begin{equation}
\label{eq:Q(z)=zdel Qtild(z)}
Q_{\xi}(z):=z^{\delta}\cdot\widetilde{Q}_{\xi}(z)\in\C[z_{1},z_{2}]
\end{equation}
with $\delta$ same as in \eqref{eq:F=zdelta*tilde(F)}, is a polynomial of degree
\begin{equation}
\label{eq:deg(Qxi)<=2sqrt(E)}
\deg(Q_{\xi})\le 2\sqrt{2}\cdot\sqrt{n},
\end{equation}
though might be divisible by $z_{1}$ or $z_{2}$.
By \eqref{eq:G(z)=g(x0)}, \eqref{eq:F=zdelta*tilde(F)}, \eqref{eq:Qtild=der G},
and \eqref{eq:Q(z)=zdel Qtild(z)}, for some $x_{0}\in\Tb^{2}$ we have
$$f(x_{0})=\partial_{\xi}f(x_{0})=0,$$ (without imposing $\nabla f(x_{0})\ne 0$), if and only if $z_{0}=z(x_{0})$ is a joint zero of both $G$ and $Q_{\xi}$, i.e. $$G(z_{0})=Q_{\xi}(z_{0}) = 0.$$

Now let
\begin{equation*}
D= \gcd(G,Q_{\xi})
\end{equation*}
be the greatest common divisor of $G$ and $Q_{\xi}$
\begin{equation*}
G(z) = A(z)\cdot D(z)
\end{equation*}
and
\begin{equation*}
Q_{\xi}(z) = B(z)\cdot D(z),
\end{equation*}
where
\begin{equation}
\label{eq:gcd(A,B)=1}
\gcd(A,B) = 1
\end{equation}
and
\begin{equation}
\label{eq:deg(A),deg(A),deg(B)<<sqrt(E)}
\deg(D),\,\deg(A)\le \deg(G)\le 2\sqrt{2}\cdot\sqrt{n}, \;\;\; \deg(B)\le \deg(Q_{\xi}) \le 2\sqrt{2}\cdot\sqrt{n}
\end{equation}
by \eqref{eq:deg(G)<=2sqrt(E)} and \eqref{eq:deg(Qxi)<=2sqrt(E)}, and,
by the above, we are interested in $z=(z_{1},z_{2})\in \C^{2}$, so that $|z_{1}|=|z_{2}|=1$
and $G(z)=Q_{\xi}(z) = 0$.

Given $z_{0}\in\C^{2}$ we have that $G(z_{0})= Q_{\xi}(z_{0}) = 0$,
if and only if either
\begin{equation*}
A(z_{0})=B(z_{0}) = 0,
\end{equation*}
or $D(z_{0})=0$ (both cannot occur simultaneously).
Denote
\begin{equation}
\label{eq:Z1 coprm def}
\Zc^{1}(G,Q_{\xi}) := \{ z\in\C^{2}:\: A(z) = B(z) = 0\}
\end{equation}
and
\begin{equation}
\label{eq:Z2 gcd def}
\Zc^{2}(G,Q_{\xi}) := \{ z\in\C^{2}:\: D(z) = 0\},
\end{equation}
the nodal directional points of the {\em first and second type} respectively.
The meaning of the above is that, under the embedding \eqref{eq:Tb->C2 embed}
of $\Sc^{1}\times \Sc^{1} \subseteq \C^{2} $,
\begin{equation}
\label{eq:dir nod pnt decomp pol}
\{x\in\Tb^{2}; f(x)= \langle \nabla f, \xi \rangle = 0\} \mapsto (\Zc^{1}(G,Q_{\xi}) \cup \Zc^{2}(G,Q_{\xi}))\cap \Sc^{1}\times\Sc^{1}.
\end{equation}
Hence understanding of $$\Zc^{1}(G,Q_{\xi}) \cup \Zc^{2}(G,Q_{\xi})$$ will also allow for bounding the size of
the l.h.s. of \eqref{eq:dir nod pnt decomp pol}; note that, unlike the definition \eqref{eq:Nzeta=f=nablf=zeta def}
of $N_{\zeta}$, the l.h.s. of \eqref{eq:dir nod pnt decomp pol}
includes singular points of $f^{-1}(0)$, having no bearing on giving an upper bound for $N_{\zeta}$
via one for the r.h.s. of \eqref{eq:Nzeta=f=nablf=zeta def}.
Since $A$ and $B$ are co-prime by \eqref{eq:gcd(A,B)=1}, and bearing in mind \eqref{eq:deg(A),deg(A),deg(B)<<sqrt(E)} and the definition
\eqref{eq:Z1 coprm def}, it follows that $\Zc^{1}(G,Q_{\xi})$
consists of finitely many isolated points, and its cardinality is bounded, by B\'ezout's Theorem\footnote{which states that if $A,B\in \C[z_1,z_2]$ are co-prime polynomials, then the number of common zeros of $A$ and $B$ is bounded by $\deg A\cdot \deg B$.}
\begin{equation}
\label{eq:|Z1|<=4E}
|\Zc^{1}(G,Q_{\xi})| \le \deg (A)\cdot \deg(B) \le 8n = \frac{2E}{\pi^{2}},
\end{equation}
on using \eqref{eq:deg(A),deg(A),deg(B)<<sqrt(E)} and \eqref{eq:E=4pi^2n}.

Now we turn to understanding $\Zc^{2}(G,Q_{\xi})$ as in \eqref{eq:Z2 gcd def}. Let $P|D$ be an irreducible divisor of $D=\gcd(G,Q_{\xi})$,
and let $x_{0}\in\Ac_{\zeta}(f)\in\Tb^{2}$ be a {\em nonsingular} nodal directional point so that $P(z_{0}) = 0$, where $z_{0}=z(x_{0})$,
the map in \eqref{eq:Tb->C2 embed}. Then, thanks to Lemma \ref{lem:nonsing no mult}, $P^{2}\nmid G$, so that we may apply Lemma
\ref{lem:irred fact scal der} to deduce that
\begin{equation}
\label{eq:phixP=tP}
D_{\xi}P = t\cdot P,
\end{equation}
for some scalar $t\in\C$. By invoking Lemma \ref{lem:P=phixiP xi rat,P}, the equality \eqref{eq:phixP=tP}
in turn implies that $\xi$ is a rational direction, and
\begin{equation*}
P(z) = p_{1}z_{1}^{k_{1}}+p_{2}z_{2}^{k_{2}},
\end{equation*}
where the {\em primitive} vector
\begin{equation}
\label{eq:k2,k1 co-lin xi}
(k_{2},k_{1})\in\Z^{2}_{\ge 0} \text{ is co-linear to } \xi = \zeta^\perp,\quad {\rm i.e. \;orthogonal\; to} \;\zeta.
\end{equation}

Thus
\begin{equation}
\label{eq:D=prod(Pi) E}
D = \left(\prod\limits_{i=1}^{K} P_{j}(z)\right)\cdot E(z),
\end{equation}
where for every $j=1,2,\ldots K$ the polynomial $P_{j}$ is of the form
\begin{equation*}
P_{j}(z) = p_{1;j}z_{1}^{k_{1}}+p_{2;j}z_{2}^{k_{2}},
\end{equation*}
for some $p_{1;j},p_{2;j}\in\C$, and
$E(z)$ is the product of irreducible factors $P\mid D$ of $D$ so that $P^2\mid D$ (corresponding to the singular points $x_{0}\in\Tb^{2}$),
and those irreducible $P\mid D$ that don't vanish on $\Sc^{1}\times \Sc^{1}\subseteq \C^{2}$. It then follows that
\begin{equation*}
K \le \frac{\deg(D)}{\max(k_{1},k_{2})} \le 2\frac{\sqrt{n}}{h(\xi)}=2\frac{\sqrt{n}}{h(\zeta)}
\end{equation*}
by \eqref{eq:deg(A),deg(A),deg(B)<<sqrt(E)}.

Now using \eqref{eq:g(x)=G(z)} on
\eqref{eq:D=prod(Pi) E}, \eqref{eq:Z2 gcd def},
we have that, under the embedding \eqref{eq:dir nod pnt decomp pol}, the zeros of $D(z)$ correspond to the zeros of
\begin{equation}
\label{eq:d(x)=map D}
d(x):=D(z(x)) = \left(\prod \limits_{j=1}^{K} \left(p_{1;j}e^{2\pi i k_{1} x_1} +p_{2;j}e^{2\pi i k_{2}x_2}\right)\right) \cdot
\tilde E(x),
\end{equation}
where $\tilde E(x):\Tb^{2}\rightarrow\R$ is the trigonometric polynomial corresponding to $E(z)=\tilde E(x)$, that only has singular zeros.
Let
\begin{equation*}
h_{j}(x):= p_{1;j}e^{2\pi i x_{1}k_{1}} +p_{2;j}e^{2\pi i x_{2}k_{2}}
\end{equation*}
be a   factor of \eqref{eq:d(x)=map D}; by construction \eqref{eq:D=prod(Pi) E}
we know a priori that the zero locus of $h_{j}$ on $\Tb^{2}$ is non-empty. In this case, necessarily
$|p_{1;j}|=|p_{2;j}|$, and upon writing $$-\frac{p_{2;j}}{p_{1;j}} =: e^{2\pi i \varphi}$$ for $\varphi\in [0,1)$, the zero locus
of $h_{j}$ is given by
\begin{equation*}
\begin{split}
h_{j}^{-1}(0) &= \left\{(x_{1},x_{2})\in\Tb^{2}: \: e(x_{1}k_{1}-x_{2}k_{2}) = -\frac{p_{2;j}}{p_{1;j}} \right\} \\&=
\left\{(x_{1},x_{2})\in\Tb^{2}: \: x_{1}k_{1}-x_{2}k_{2} = \varphi \mod{1} \right\},
\end{split}
\end{equation*}
hence is a closed geodesic in $\Tb^{2}$ (it has a single connected component, since, by assumption,
$\gcd(k_{1},k_{2})=1$), orthogonal to $(k_{1},-k_{2})$, of length $\sqrt{k_1^2+k_2^2}$, and, recalling \eqref{eq:k2,k1 co-lin xi},
the geodesic $h_{j}^{-1}(0)$ is orthogonal to $\zeta$.
In summary, under the embedding \eqref{eq:Tb->C2 embed}, the nonsingular points on $f^{-1}(0)$ corresponding to
the set $\Zc^{2}(G,Q_{\xi})\cap (\Sc^{1}\times \Sc^{1})$ consist of $$\le 2\frac{\sqrt{n}}{h(\xi)}=\frac{\sqrt{E}}{h(\zeta)}$$
closed geodesics orthogonal to $\zeta$, concluding the statement of Theorem \ref{thm:upper bnd class}.
\end{proof}

\section{Expected nodal direction number for arithmetic random waves: proof of Theorem \ref{thm:E[Nxi] d=2}}

In this section, we compute the expected value of $N_\zeta$ for arithmetic random waves. The formal computation is along the lines of Swerling's paper \cite{Swerling}, but his argument relied on several assumptions, some implicit, on the nature of the relevant Gaussian field, which are difficult to isolate and check separately. Thus we carry out the computation {\it ab initio}.

\subsection{Proof of Theorem \ref{thm:E[Nxi] d=2}}
\label{sec:mean arw d=2}

\begin{proof}

Let $\xi = \zeta^{\perp}$ be the orthogonal vector to $\zeta$, and define,
\begin{equation*}
\widetilde{N_{\zeta}}(f) = \#\left\{x\in\Tb^{2}:\: f(x)=\langle\nabla f(x),\xi\rangle = 0  \right\}
\end{equation*}
to be the size of the set $\Ac_{\zeta}(f)$ in \eqref{eq:Aczeta(f) def}, finite or infinite.
Equivalently,
\begin{equation*}
\widetilde{N_{\zeta}}(f) = N_{\zeta}(f) + \#\sing(f),
\end{equation*}
where
\begin{equation*}
\sing(f) = \left\{ x\in\Tb^{2}:\: f(x)=0,\, \nabla f(x) = 0 \right\}
\end{equation*}
is the set of singular nodal points of $f$.

Since by Bulinskaya's Lemma ~\cite[Proposition 6.12]{AW}, the singular set $\sing(f)$
is empty almost surely (that the statement of Bulinskaya's Lemma is valid in our concrete case
was established in ~\cite[Lemma 2.3]{ORW2007}), we have that
\begin{equation}
\label{eq:Nzeta=Ntild}
N_{\zeta}(f) = \widetilde{N_{\zeta}}(f) = \#\left\{x\in\Tb^{2}:\: f(x)=\langle\nabla f(x),\xi\rangle = 0  \right\}.
\end{equation}
That is, upon defining the Gaussian random field $G:\Tb^{2}\rightarrow\R^{2}$
\begin{equation}
\label{eq:G def}
G(x)=G_{\xi}(x) = (f(x),\langle \nabla f(x),\xi \rangle),
\end{equation}
then $N_{\zeta}$ equals almost surely the number of zeros of $G$. Let $J_{G}(x)$ be the Jacobian of $G$ given by
\begin{equation*}
\begin{split}
J_{G}(x) &= \det\left(\begin{matrix}
f_{1} &f_{2} \\ f_{11}\xi_{1}+f_{12}\xi_{2} & f_{12}\xi_{1}+f_{22}\xi_{2}
\end{matrix}\right) = f_{1}(f_{12}\xi_{1}+f_{22}\xi_{2}) - f_{2}(f_{11}\xi_{1}+f_{12}\xi_{2}),
\end{split}
\end{equation*}
where we denote $f_{i} = \partial f/\partial x_i$, $f_{ij} = \partial^2 f/\partial x_i\partial x_j$,
and all the derivatives of $f$ are evaluated at $x$.

The zero density function is
\begin{equation}
\label{eq:K1 phi*E}
K_{1}(x)=K_{1;\xi}(x) = \phi_{G(x)}(0,0)\cdot \E[|J_{G}(x)| \big| G(x)=0],
\end{equation}
where $\phi_{G(x)}$ is the probability density function of the random vector $G(x)\in \R^{2}$;
by the aforementioned stationarity \eqref{eq:stat law} of $f_{n}$, we have
$$K_{1}(x)\equiv K_{1}(0).$$
By Kac-Rice ~\cite[Theorem 6.3]{AW} and \eqref{eq:Nzeta=Ntild}, we have that
\begin{equation}
\label{eq:E[Nxi]=int(K1) Kac-Rice}
\E[N_{\zeta}] = \int\limits_{\Tb^{2}} K_{1}(x)dx,
\end{equation}
provided that the distribution of $G(x)$ is non-degenerate for every $x\in\Tb^{2}$. By stationarity, it is  sufficient to check non-degeneracy of  $G(0)$, which is valid since $(f(0),\nabla f(0)) \in \R^{3}$ is non-degenerate by the computation below.
The statement of Theorem \ref{thm:E[Nxi] d=2} follows upon substituting the statements of Lemma \ref{lem:phi(0,0)=1/sqrt(n)} and Proposition
\ref{prop:|JG(x)||G(x)=0} below into \eqref{eq:K1 phi*E} so that
\begin{equation*}
K_{1}(x) = \frac{1}{\sqrt{2}}n\cdot \left(1+\widehat{\mu_{n}}(4)\cdot \cos(4\theta)\right)^{1/2},
\end{equation*}
and then finally into \eqref{eq:E[Nxi]=int(K1) Kac-Rice}.
\end{proof}

In course of the proof of Theorem \ref{thm:E[Nxi] d=2} we used the following results established in
\S\ref{sec:2d density comp} below:

\begin{lemma}
\label{lem:phi(0,0)=1/sqrt(n)}
Let $G:\Tb^{2}\rightarrow \R^{2}$ be the Gaussian field defined by \eqref{eq:G def},
and $\phi_{G(x)}$ the probability density function of $G(x)$. Then for every $x\in \Tb^{2}$ we have
\begin{equation}
\label{eq:phi(0,0)=1/sqrt(n)}
\phi_{G(x)}(0,0) = \frac{1}{2\pi\sqrt{\det C_{G}(x)}} = \frac{1}{2^{3/2}\pi^{2}\sqrt{n}}.
\end{equation}

\end{lemma}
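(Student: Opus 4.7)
The plan is to observe that $G(x)\in\R^2$ is a centred Gaussian vector (a linear combination of the centred Gaussians $\{c_\lambda\}$ subject only to \eqref{eq:c(-lam),c(lam)}), so once non-degeneracy is verified its density at the origin is the standard two-dimensional formula
\[
\phi_{G(x)}(0,0)=\frac{1}{2\pi\sqrt{\det C_G(x)}}.
\]
By the stationarity \eqref{eq:stat law} the covariance $C_G(x)$ is independent of $x$, so the task reduces to computing $\det C_G(0)=2\pi^{2}n$.

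First I would compute the three entries of $C_G(0)$ from the standard identities tying moments of a stationary Gaussian field to its covariance $r_n$. We have $\E[f(0)^{2}]=r_n(0)=1$, and the cross term $\E[f(0)\,\langle\nabla f(0),\xi\rangle]$ equals $\langle\nabla r_n(0),\xi\rangle$ up to a sign; this vanishes because $\Ec_n$ is symmetric under $\lambda\mapsto-\lambda$, which makes $r_n$ an even function so that $\nabla r_n(0)=0$. Hence $C_G(0)$ is diagonal and $\det C_G(0)=\E[\langle\nabla f(0),\xi\rangle^{2}]$.

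Next I would differentiate \eqref{eq:r covar def} twice to obtain
\[
\E[\partial_i f(0)\,\partial_j f(0)]=-\partial_i\partial_j r_n(0)=\frac{4\pi^{2}}{r_2(n)}\sum_{\lambda\in\Ec_n}\lambda_i\lambda_j.
\]
The symmetry $(\lambda_1,\lambda_2)\mapsto(\lambda_1,-\lambda_2)$ of $\Ec_n$ annihilates the off-diagonal entries, while the swap $(\lambda_1,\lambda_2)\mapsto(\lambda_2,\lambda_1)$ equalises the diagonal entries; since their sum is $\frac{4\pi^{2}}{r_2(n)}\sum_\lambda\|\lambda\|^{2}=4\pi^{2}n$, each diagonal entry equals $2\pi^{2}n$. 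Thus the covariance of $\nabla f(0)$ is the isotropic matrix $2\pi^{2}n\cdot I_{2\times 2}$, and in particular $\E[\langle\nabla f(0),\xi\rangle^{2}]=2\pi^{2}n$ for every unit vector $\xi$. Combining, $\det C_G(0)=2\pi^{2}n$ and
\[
\phi_{G(x)}(0,0)=\frac{1}{2\pi\sqrt{2\pi^{2}n}}=\frac{1}{2^{3/2}\pi^{2}\sqrt{n}},
\]
as claimed.

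No serious obstacle should arise: the computation is essentially routine, and its conceptual content is that the symmetries of $\Ec_n$ force the gradient covariance to be a scalar multiple of the identity, so the density at the origin is independent of the direction $\xi$. The $\xi$-dependence of $\E[N_\zeta]$ will instead enter through the conditional expectation $\E[|J_G(x)|\mid G(x)=0]$ treated in Proposition \ref{prop:|JG(x)||G(x)=0}, where higher Fourier moments of $\mu_n$ such as $\widehat{\mu_n}(4)$ come into play.
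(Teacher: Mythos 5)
Your proof is correct and follows essentially the same approach as the paper: identify $C_G(x)$ as the diagonal matrix $\operatorname{diag}(1,\,2\pi^2 n)$ (using $\operatorname{Var}(\langle\nabla f,\xi\rangle)=2\pi^2 n$ for any unit $\xi$) and plug into the $2$-dimensional Gaussian density formula. The only difference is that you re-derive the covariance matrix of $(f,\nabla f)$ from the symmetries of $\Ec_n$, whereas the paper simply cites Proposition~4.1 of \cite{RW2008} for that fact.
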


\begin{proposition}
\label{prop:|JG(x)||G(x)=0}
Let $G:\Tb^{2}\rightarrow \R^{2}$ be the Gaussian field defined by \eqref{eq:G def}, and $J_{G}(x)$ its Jacobian. Then
 the conditional expectation of $|J_G(x)|$ conditioned on $G=0$ is
\begin{equation*}
\E[|J_{G}(x)|\big|G(x)=0] = 2 \pi^{2} \left(1+\widehat{\mu_{n}}(4)\cdot \cos(4\theta)\right)^{1/2}\cdot n^{3/2}
\end{equation*}
\end{proposition}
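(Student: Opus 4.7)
The plan is a direct Kac--Rice computation exploiting the independence structure of the arithmetic random wave. Since $f$ has unit variance and the covariance $r_n(u)=\frac{1}{N_n}\sum_{\lambda\in\Ec_n}e(\langle\lambda,u\rangle)$ is an even function of $u$, all odd-order derivatives of $r_n$ at the origin vanish. This yields two crucial independence statements at the point $x$: the pair $(f_1,f_2)$ is independent of the quadruple $(f,f_{11},f_{12},f_{22})$; and $\mathrm{Cov}(f,f_{12})=0$. Moreover $\mathrm{Cov}(f_1,f_2)=0$ and $\mathrm{Var}(f_1)=\mathrm{Var}(f_2)$; these three identities follow from $\widehat{\mu_n}(2)=0$, which is itself forced by the $\pi/2$-rotational symmetry of $\Ec_n$. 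Consequently the conditioning on $G(x)=0$ decouples cleanly: the constraint $\xi\cdot\nabla f=0$ only constrains $(f_1,f_2)$, while $f=0$ only constrains the Hessian.

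Writing $\zeta=(\cos\theta,\sin\theta)$ and $\xi=(-\sin\theta,\cos\theta)$, the first constraint forces $(f_1,f_2)=t\zeta$ with $t\sim \mathcal{N}(0,2\pi^2 n)$; substituting into the Jacobian from the proof of Theorem~\ref{thm:E[Nxi] d=2} produces
\[
J_G \;=\; t\cdot Y,\qquad Y \;:=\; \sin^2\theta\cdot f_{11}-\sin(2\theta)\cdot f_{12}+\cos^2\theta\cdot f_{22}.
\]
Since $t$ and $Y$ depend on the two independent sectors, they remain independent conditionally on $G=0$, so
\[
\E\bigl[|J_G|\,\big|\,G=0\bigr]\;=\;\E|t|\cdot\E\bigl[|Y|\,\big|\,f=0\bigr],
\]
and each factor is of the form $\sigma\sqrt{2/\pi}$ for the appropriate Gaussian standard deviation.

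Computing $\mathrm{Var}(Y|f=0)$ is the main algebraic task. I would express $\mathrm{Cov}(f_{ij},f_{kl})$ and $\mathrm{Cov}(f,f_{ij})$ as lattice sums over $\Ec_n$, and rewrite the fourth-moment sums $\sum\lambda_1^4$, $\sum\lambda_1^2\lambda_2^2$, $\sum\lambda_2^4$ in terms of $\widehat{\mu_n}(4)$ using
\[
\widehat{\mu_n}(4)\;=\;\frac{1}{N_n n^2}\sum_{\lambda\in\Ec_n}\bigl(\lambda_1^4-6\lambda_1^2\lambda_2^2+\lambda_2^4\bigr)
\]
together with the elementary identity $\sum(\lambda_1^2+\lambda_2^2)^2=N_n n^2$. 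Applying the Gaussian regression $\Sigma_{HH|f=0}=\Sigma_{HH}-\Sigma_{Hf}\Sigma_{fH}/\mathrm{Var}(f)$ then gives the conditional covariance of the Hessian given $f=0$.

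The main obstacle is the final algebraic simplification: after the regression step, $\mathrm{Var}(Y|f=0)$ takes the form (up to a common prefactor $2\pi^4 n^2$)
\[
(a-b)^2\bigl(1+\widehat{\mu_n}(4)\bigr)\;+\;c^2\bigl(1-\widehat{\mu_n}(4)\bigr),
\]
where $a-b=\sin^2\theta-\cos^2\theta=-\cos(2\theta)$ and $c=-\sin(2\theta)$. The double-angle identities $(a-b)^2+c^2=1$ and $(a-b)^2-c^2=\cos(4\theta)$ then collapse this into $1+\widehat{\mu_n}(4)\cos(4\theta)$; combining with the elementary factor $\E|t|$ produces the claimed closed form.
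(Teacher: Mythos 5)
Your computation follows the same Kac--Rice route as the paper: factor the conditional expectation using the independence of $(f_1,f_2)$ from $(f,f_{11},f_{12},f_{22})$, then do a Gaussian regression on the remaining sector. The one genuine difference is small: the paper invokes the eigenvalue relation $f=-\tfrac{1}{4\pi^2 n}(f_{11}+f_{22})$ to trade the constraint $f=0$ for $f_{11}+f_{22}=0$ (so that the covariance matrix only involves first and second derivatives), whereas you regress the Hessian on $f$ directly using $\mathrm{Cov}(f,f_{11})=\mathrm{Cov}(f,f_{22})=-2\pi^2 n$ and $\mathrm{Cov}(f,f_{12})=0$. Both give the same conditional covariance $\mathrm{diag}\bigl(2\pi^4n^2(1+\widehat{\mu_n}(4)),\,2\pi^4n^2(1-\widehat{\mu_n}(4))\bigr)$ after removing the trace, so there is no real mathematical distance here; your version is perhaps a touch cleaner in that it makes the factorization $\E|J_G|=\E|t|\cdot\E\bigl[|Y|\,\big|\,f=0\bigr]$ completely explicit and avoids carrying the auxiliary variable $X_3+X_5$.

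However, you should not wave your hands over the final arithmetic: carrying out your own plan literally gives a prefactor of $4\pi^2$, not $2\pi^2$. Concretely, $\E|t|=\sqrt{2/\pi}\,\sqrt{2\pi^2 n}=2\sqrt{\pi n}$ and $\E\bigl[|Y|\,\big|\,f=0\bigr]=\sqrt{2/\pi}\,\sqrt{2\pi^4 n^2\bigl(1+\widehat{\mu_n}(4)\cos(4\theta)\bigr)}=2\pi^{3/2}n\bigl(1+\widehat{\mu_n}(4)\cos(4\theta)\bigr)^{1/2}$, whose product is $4\pi^2 n^{3/2}\bigl(1+\widehat{\mu_n}(4)\cos(4\theta)\bigr)^{1/2}$. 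In fact the paper's own displayed chain has a slip at the step $2\pi^3\cdot\sqrt{2/\pi}=2^{3/2}\pi^{5/2}$ (written there as $2^{1/2}\pi^{5/2}$), and a sanity check at $n=1$, $\theta=0$ --- where $\widehat{\mu_1}(4)=1$ and a direct count of the four directional points of $A\cos(2\pi x_1)+B\cos(2\pi x_2)$ with probability $\tfrac12$ gives $\E[N_\zeta]=2$ --- confirms that the correct constants are $\E\bigl[|J_G|\,\big|\,G=0\bigr]=4\pi^2 n^{3/2}(\cdots)^{1/2}$ and $\E[N_\zeta]=\sqrt{2}\,n(\cdots)^{1/2}$. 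So do the multiplication, report $4\pi^2$, and note the discrepancy with the stated proposition rather than silently asserting agreement.
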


\subsection{Proofs of Lemma \ref{lem:phi(0,0)=1/sqrt(n)} and Proposition \ref{prop:|JG(x)||G(x)=0}: evaluating the zero density}

\label{sec:2d density comp}

\begin{proof}[Proof of Lemma \ref{lem:phi(0,0)=1/sqrt(n)}]
  The covariance matrix of $(f(x),\nabla f(x))$ was computed in \cite[Proposition 4.1]{RW2008} to be
\begin{equation*}
C_{(f,\nabla f)} = \left(\begin{matrix}
1 \\ &2\pi^{2}nI_{2}
\end{matrix}\right),
\end{equation*}
in particular $f(x)$ is independent of $\nabla f(x)$;
hence the covariance matrix of $G$ is
\begin{equation*}
C_{G}(x) = \left(\begin{matrix}
1 \\&2\pi^{2}n
\end{matrix}\right),
\end{equation*}
where we used
$$\var(\langle \nabla f(x),\xi\rangle) = \xi_{1}^{2}\var(f_{1})+\xi_{2}^{2}\var(f_{2}) =
2\pi^{2}n,$$ since $$\xi_{1}^{1}+\xi_{2}^{2}=1.$$ Thus
\begin{equation*}
\phi_{G(x)}(0,0) = \frac{1}{2\pi\sqrt{\det C_{G}(x)}} = \frac{1}{2^{3/2}\pi^{2}\sqrt{n}}.
\end{equation*}

\end{proof}

\begin{proof}[Proof of Proposition \ref{prop:|JG(x)||G(x)=0}]

We are going to work under the assumption $\xi_{2}\ne 0$; one can easily see that the same result
holds for $\xi_{2} = 0$ true, e.g. by switching between $\xi_{1}$ and $\xi_{2}$; by stationarity we may assume $x=0$.
Since $f$ is a Laplace eigenfunction of eigenvalue $4\pi^{2}n$, we have that\footnote{This fact helps in simplifying the computation of the first intensity by allowing us to reduce the size of the covariance matrix, as seen in  another few steps.}
\begin{equation*}
f(x) = - \frac{1}{4\pi^{2}n}(f_{11}+f_{22}),
\end{equation*}
and therefore
\begin{equation*}
\E[|J_{G}(x)|\big| G(x)=0] = \E[|J_{G}(x)| \big| f_{11}+f_{22}=0,\,f_{1}\xi_{1}+f_{2}\xi_{2}=0].
\end{equation*}
Then (recall that we assumed $\xi_{2}\ne 0$)
\begin{equation*}
\begin{split}
&\E[|J_{G}(x)||G(x)=0] \\&= \E[|J_{G}(x)| \big| f_{11}+f_{22}=0,\,f_{1}\xi_{1}+f_{2}\xi_{2}=0]
\\&=
\E[\left| f_{1}(f_{12}\xi_{1}+f_{22}\xi_{2}) - f_{2}(f_{11}\xi_{1}+f_{12}\xi_{2})\right| \, \big| f_{11}+f_{22}=0,\, f_{1}\xi_{1}+f_{2}\xi_{2}=0]
\\&= \E[\left| f_{1}(f_{12}\xi_{1}-f_{11}\xi_{2}) - f_{2}(f_{11}\xi_{1}+f_{12}\xi_{2})\right| \, \big| f_{11}+f_{22}=0,\, f_{1}\xi_{1}+f_{2}\xi_{2}=0]
\\&=\E\left[\left| f_{1}\cdot \left( f_{12}\xi_{1}-f_{11}\xi_{2} + f_{11}\frac{\xi_{1}^{2}}{\xi_{2}}+f_{12}\xi_{1}   \right)\right| \big| f_{11}+f_{22}=0,\, f_{1}\xi_{1}+f_{2}\xi_{2}=0 \right]
\\&=
\E\left[\left| f_{1}\cdot \left( 2f_{12}\xi_{1}-f_{11}\frac{\xi_{1}^{2}-\xi_{2}^{2}}{\xi_{2}}\right)\right| \big| f_{11}+f_{22}=0,\, f_{1}\xi_{1}+f_{2}\xi_{2}=0 \right].
\end{split}
\end{equation*}
Hence we are interested in the distribution of $(f_{1},f_{11},f_{12})(x)$
conditioned on
$$f_{11}+f_{22}=0,\, f_{1}\xi_{1}+f_{2}\xi_{2}=0.$$
The covariance matrix of $(f_{1},f_{2},f_{11},f_{12},f_{22})$ is given by Lemma \ref{lem:covar deriv},
we will then compute the covariance matrix
of $(f_{1},f_{11},f_{12},f_{1}\xi_{1}+f_{2}\xi_{2},f_{11}+f_{22})$, and then condition on
the last two variables.
To avoid carrying on the constants we transform the variables
\begin{equation*}
((X_{1},X_{2}),(X_{3},X_{4},X_{5})) = \left(\frac{1}{(2\pi^{2})^{1/2}\sqrt{n}}(f_{1},f_{2}), \frac{1}{(2\pi^{4})^{1/2}n}(f_{11},f_{12},f_{22})\right)
\end{equation*}
with covariance matrix
\begin{equation*}
C_{X_{1},\ldots, X_{5}} =
\left( \begin{matrix}
I_{2\times 2} &
\\ 0 &A_{3\times 3}
\end{matrix}\right),
\end{equation*}
with
\begin{equation*}
A = \left( \begin{matrix}
3+\widehat{\mu_{n}}(4) &0 &1- \widehat{\mu_{n}}(4)   \\
0 &1- \widehat{\mu_{n}}(4) &0 \\
1- \widehat{\mu_{n}}(4) &0 &3+\widehat{\mu_{n}}(4),
\end{matrix}  \right)
\end{equation*}
and we are to compute
\begin{equation}
\label{eq:E[JG] terms X}
\begin{split}
&\E[|J_{G}(x)| \big| G(x)=0] =
\\2\pi^{3}n^{3/2} \cdot &E\left[\left| X_{1}\cdot \left( 2X_{4}\xi_{1}-X_{3}\frac{\xi_{1}^{2}-\xi_{2}^{2}}{\xi_{2}}\right)\right| \big| X_{3}+X_{5}=0,\, X_{1}\xi_{1}+X_{2}\xi_{2}=0 \right].
\end{split}
\end{equation}

Next we compute the covariance matrix of $(X_{1},X_{3},X_{4},X_{3}+X_{5},X_{1}\xi_{1}+X_{2}\xi_{2})$
to be
\begin{equation*}
C_{(X_{1},X_{3},X_{4},X_{3}+X_{5},X_{1}\xi_{1}+X_{2}\xi_{2})} =
\left(\begin{matrix}
B_{3\times 3} &D_{3\times 2} \\ D^{t}_{2\times 3} &E_{2\times 2}
\end{matrix} \right),
\end{equation*}
where $B=C_{X_{1},X_{3},X_{4}}$ is the covariance matrix of $(X_{1},X_{3},X_{4})$,
$E=C_{X_{3}+X_{5},X_{1}\xi_{1}+X_{2}\xi_{2}}$ is the covariance matrix of $$(X_{3}+X_{5},X_{1}\xi_{1}+X_{2}\xi_{2}),$$
and $$D=\E[(X_{1},X_{3},X_{4})^{t}(X_{3}+X_{5},X_{1}\xi_{1}+X_{2}\xi_{2})].$$
From the above it follows directly that
\begin{equation*}
B = \left(\begin{matrix}
1 & &\\ &3+\widehat{\mu_{n}}(4)\ \\ & &1- \widehat{\mu_{n}}(4)
\end{matrix}\right),
\end{equation*}
\begin{equation*}
D = \left(\begin{matrix}
0 &\xi_{1} \\ 4 & 0 \\ 0 &0
\end{matrix}\right),
\end{equation*}
\begin{equation*}
E=\left( \begin{matrix}
8 & 0 \\ 0 &1
\end{matrix} \right).
\end{equation*}

Let $Y=(Y_{1},Y_{2},Y_{3})$ be the vector $(X_{1},X_{3},X_{4})$ conditioned on
$$X_{3}+X_{5}=X_{1}\xi_{1}+X_{2}\xi_{2}=0,$$ so that under the new notation \eqref{eq:E[JG] terms X} is
\begin{equation}
\label{eq:EJG terms Y}
\begin{split}
&\E[|J_{G}(x)| \big| G(x)=0] \\&= 2\pi^{3}n^{3/2}\E\left[\left| Y_{1}\cdot \left( 2Y_{3}\xi_{1}-Y_{2}\frac{\xi_{1}^{2}-\xi_{2}^{2}}{\xi_{2}}\right)\right|\right].
\end{split}
\end{equation}
The covariance matrix of $Y$ is
\begin{equation*}
\begin{split}
C_{Y}=B-DE^{-1}D^{t} &= \left(\begin{matrix}
1-\xi_{1}^{2} \\&1+ \widehat{\mu_{n}}(4) \\ & &1- \widehat{\mu_{n}}(4)
\end{matrix} \right) \\&=  \left(\begin{matrix}
\xi_{2}^{2} \\ &1+ \widehat{\mu_{n}}(4) \\ & &1- \widehat{\mu_{n}}(4)
\end{matrix} \right),
\end{split}
\end{equation*}
where for the above we computed
\begin{equation*}
\begin{split}
DE^{-1}D^{t} &= \left(\begin{matrix}
0 &\xi_{1} \\ 4 & 0 \\ 0 &0
\end{matrix}\right) \cdot \left( \begin{matrix}
\frac{1}{8} \\ & 1
\end{matrix} \right) \cdot \left( \begin{matrix}
0 &4 &0 \\ \xi_{1} & 0 &0 \end{matrix}\right)
\\&= \left( \begin{matrix}
0 &\xi_{1} \\ \frac{1}{2} & 0 \\ 0 &0
\end{matrix} \right) \cdot \left( \begin{matrix}
0 &4 &0 \\ \xi_{1} & 0 &0\end{matrix}\right) = \left( \begin{matrix}
\xi_{1}^{2} \\ &2 \\ & & 0
\end{matrix} \right)
\end{split}
\end{equation*}

We may simplify the expression \eqref{eq:EJG terms Y} using the fact that the $Y_{j}$ are independent:
\begin{equation}
\label{eq:EJG terms Z}
\begin{split}
\E[|J_{G}(x)| \big| G(x)=0] &= 2\pi^{3}n^{3/2}\E[|Y_{1}|]
\cdot \E\left[\left|2Y_{3}\xi_{1}-Y_{2}\frac{\xi_{1}^{2}-\xi_{2}^{2}}{\xi_{2}}\right|\right]
\\&= 2^{1/2}\pi^{5/2}n^{3/2}\cdot
\E\left[\left| \left( 2Y_{3}\xi_{1}\xi_{2}-Y_{2}\left(\xi_{1}^{2}-\xi_{2}^{2}\right)\right)\right|\right]
\\&= 2^{1/2}\pi^{5/2}n^{3/2}\cdot \E\left[ |Z_{1}\sin(2\theta)+Z_{2}\cos(2\theta)   | \right],
\end{split}
\end{equation}
where $(Z_{1}=Y_{3},Z_{2}=-Y_{2})$ is a centered Gaussian with covariance
$$\left(\begin{matrix}
1-\widehat{\mu_{n}}(4) \\ &1+ \widehat{\mu_{n}}(4)
\end{matrix} \right)  ,$$ also valid for $\xi_{2}=0$
($\theta$ is the direction of $\zeta$, or of $\xi$ by the sign invariance of the distribution of $Z_{1}$, $Z_{2}$).

The random variable $$A:=Z_{1}\sin(2\theta)+Z_{2}\cos(2\theta)$$ is centered Gaussian, whose variance is
$$\var(A) = (1- \widehat{\mu_{n}}(4))\sin(2\theta)^{2} + (1+ \widehat{\mu_{n}}(4))\cos(2\theta)^{2} = 1+\widehat{\mu_{n}}(4)\cos(4\theta),$$
and \eqref{eq:EJG terms Z} is
\begin{equation*}
\begin{split}
\E[|J_{G}(x)| \big| G(x)=0] &= 2^{1/2}\pi^{5/2}n^{3/2} \cdot \E[|A|]\\& = 2^{1/2}\pi^{5/2}n^{3/2}\cdot \sqrt{\frac{2}{\pi}} \sqrt{\var(A)}
\\&= 2 \pi^{2}n^{3/2} \cdot (1+\widehat{\mu_{n}}(4)\cos(4\theta))^{1/2},
\end{split}
\end{equation*}
which is the statement of Proposition \ref{prop:|JG(x)||G(x)=0}.

\end{proof}

\subsection{Auxiliary lemmas}

\begin{lemma}[Cf. ~\cite{K-K-W}, Lemma 8.1]
\label{lem:sum lambda1^4,lambda1^2lambda2^2}
We have
\begin{equation*}
\begin{split}
\frac{1}{N}\sum\limits_{\lambda\in\Ec_{n}}\lambda_{1}^{4} = n^{2}\left(\frac{3}{8}+\frac{1}{8}\widehat{\mu_{n}}(4)\right),
\end{split}
\end{equation*}
and
\begin{equation*}
\begin{split}
\frac{1}{N}\sum\limits_{\lambda\in\Ec_{n}}\lambda_{1}^{2}\lambda_{2}^{2} =
\frac{n^{2}}{8}\left(1- \widehat{\mu_{n}}(4)  \right) .
\end{split}
\end{equation*}

\end{lemma}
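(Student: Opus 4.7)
The plan is to read off the two sums from two linear relations: one algebraic (using $\lambda_1^2+\lambda_2^2 = n$ on $\Ec_n$), and one arising directly from the definition of $\widehat{\mu_n}(4)$. Throughout, write $S_4 := \tfrac{1}{N}\sum_{\lambda\in\Ec_n}\lambda_1^4$ and $S_{22} := \tfrac{1}{N}\sum_{\lambda\in\Ec_n}\lambda_1^2\lambda_2^2$, where $N = r_2(n)$. By the symmetry $\lambda = (\lambda_1,\lambda_2)\mapsto (\lambda_2,\lambda_1)$ of $\Ec_n$, one also has $\tfrac{1}{N}\sum_{\lambda}\lambda_2^4 = S_4$.

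First I would exploit the identity $\lambda_1^2+\lambda_2^2 = n$, valid for every $\lambda\in\Ec_n$. Squaring gives $\lambda_1^4 + 2\lambda_1^2\lambda_2^2 + \lambda_2^4 = n^2$ pointwise, and averaging yields
\begin{equation*}
2 S_4 + 2 S_{22} = n^2.
\end{equation*}

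Next I would use the definition of the Fourier coefficient: since $\widehat{\mu_n}(4)\in\R$, it equals the real part of $\tfrac{1}{N n^2}\sum_{\lambda}(\lambda_1+i\lambda_2)^4$. Expanding by the binomial theorem and taking real parts,
\begin{equation*}
n^2\,\widehat{\mu_n}(4) = \tfrac{1}{N}\sum_{\lambda\in\Ec_n}\bigl(\lambda_1^4 - 6\lambda_1^2\lambda_2^2 + \lambda_2^4\bigr) = 2S_4 - 6S_{22}.
\end{equation*}
Solving the two-by-two linear system in $S_4$ and $S_{22}$ gives
\begin{equation*}
S_{22} = \tfrac{n^2}{8}\bigl(1-\widehat{\mu_n}(4)\bigr), \qquad S_4 = \tfrac{n^2}{8}\bigl(3+\widehat{\mu_n}(4)\bigr),
\end{equation*}
which are precisely the claimed formulae.

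There is no real obstacle here: the only things to check are the $\lambda_1\leftrightarrow\lambda_2$ symmetry of $\Ec_n$ (immediate) and that $\widehat{\mu_n}(4)$ is real (which is stated in the paper and also follows from the symmetry $\lambda \mapsto -\lambda$ on $\Ec_n$, which turns $(\lambda_1+i\lambda_2)^4$ into its complex conjugate after combining with $\lambda\mapsto(\lambda_1,-\lambda_2)$, also a symmetry of $\Ec_n$). The proof is therefore a one-step linear algebra computation once these two relations are in hand.
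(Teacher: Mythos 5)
Your proof is correct. The paper itself does not supply a proof of this lemma; it simply cites Lemma~8.1 of \cite{K-K-W}, whose standard derivation parametrizes $\lambda/\sqrt{n}=e^{i\theta_\lambda}$ and expands $\cos^4\theta_\lambda$ and $\cos^2\theta_\lambda\sin^2\theta_\lambda$ via the identities $\cos^4\theta=\tfrac38+\tfrac12\cos 2\theta+\tfrac18\cos 4\theta$ and $\cos^2\theta\sin^2\theta=\tfrac18-\tfrac18\cos 4\theta$, then uses the fourfold symmetry of $\Ec_n$ to kill the $\cos 2\theta$ term (i.e.\ $\widehat{\mu_n}(2)=0$). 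Your route is a mild but genuine repackaging: you never introduce angles, instead extracting a $2\times 2$ linear system from (i) squaring $\lambda_1^2+\lambda_2^2=n$ and (ii) expanding $(\lambda_1+i\lambda_2)^4$ in the definition of $\widehat{\mu_n}(4)$. This buys you two small things: you avoid having to invoke or reprove $\widehat{\mu_n}(2)=0$ (the $\lambda_1\leftrightarrow\lambda_2$ symmetry, which you do use to get $\sum\lambda_2^4=\sum\lambda_1^4$, is what does the work instead), and the computation stays entirely in the polynomial ring rather than passing through trigonometric identities. One tiny cosmetic point: rather than writing $\widehat{\mu_n}(4)$ as the \emph{real part} of $\tfrac{1}{Nn^2}\sum(\lambda_1+i\lambda_2)^4$, it is cleaner to note that this sum is already real (by the symmetry $\lambda\mapsto(\lambda_1,-\lambda_2)$ of $\Ec_n$, which conjugates each summand), since that is exactly the reality claim made in the paper's definition; your argument then proceeds unchanged. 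Everything else checks: solving $2S_4+2S_{22}=n^2$ together with $2S_4-6S_{22}=n^2\widehat{\mu_n}(4)$ gives precisely $S_{22}=\tfrac{n^2}{8}(1-\widehat{\mu_n}(4))$ and $S_4=\tfrac{n^2}{8}(3+\widehat{\mu_n}(4))$, matching the lemma.
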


\begin{lemma}
\label{lem:covar deriv}
Let $f=f_{n}$ be the arithmetic random waves (the random field \eqref{eq:f arith rand wav def} where $c_{\lambda}$ are
assumed to be i.i.d. standard Gaussian
save to \eqref{eq:c(-lam),c(lam)}), and
$X=(f_{1},f_{2},f_{11},f_{12},f_{22})$ vector of various derivatives evaluated at $x=0$. Then
$X$ is centered multivariate Gaussian with covariance matrix
\begin{equation*}
\begin{split}
&C_{f_{1},f_{2},f_{11},f_{12},f_{22}} \\&=
\left( \begin{matrix}
2\pi^{2}n &0 &0 &0 &0 \\
0 & 2\pi^{2}n &0 &0 &0 \\
0 &0 &2\pi^{4}n^{2}\left(3+\widehat{\mu_{n}}(4)\right) &0 &2\pi^{4} n^{2}\left(1- \widehat{\mu_{n}}(4)  \right) \\
0 &0 &0 &2\pi^{4}n^{2}\left(1- \widehat{\mu_{n}}(4)\right) &0 \\
0 &0 &2\pi^{4} n^{2}\left(1- \widehat{\mu_{n}}(4)\right) &0 &2\pi^{4}n^{2}\left(3+\widehat{\mu_{n}}(4)\right)
\end{matrix}  \right).
\end{split}
\end{equation*}

\end{lemma}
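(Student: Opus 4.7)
The plan is a direct second-moment computation from the Fourier expansion \eqref{eq:f arith rand wav def}, combined with the two obvious symmetries of the frequency set $\Ec_n$.

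First, I would differentiate \eqref{eq:f arith rand wav def} termwise to obtain, at $x=0$,
\[
f_i(0) = 2\pi i \sum_{\lambda\in\Ec_n}\lambda_i\, c_\lambda,
\qquad
f_{ij}(0) = -4\pi^2 \sum_{\lambda\in\Ec_n}\lambda_i\lambda_j\, c_\lambda,
\]
which are real by the Hermitian constraint \eqref{eq:c(-lam),c(lam)}. The independence of the $c_\lambda$ (modulo \eqref{eq:c(-lam),c(lam)}) translates, with the normalisation that yields $\var(f(0))=1$, into the single moment identity
\[
\E[c_\lambda c_{\lambda'}] = \frac{1}{N_n}\,\delta_{\lambda+\lambda',0}.
\]
Each entry of the $5\times 5$ covariance matrix then reduces to a monomial sum $\frac{1}{N_n}\sum_{\lambda\in\Ec_n}\lambda_1^{a}\lambda_2^{b}$ of degree $a+b\in\{2,3,4\}$, multiplied by an explicit power of $2\pi$.

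Next, I would dispose of all the zero entries in one stroke using two symmetries of $\Ec_n$. The involution $\lambda\mapsto -\lambda$ kills every odd-degree monomial sum, so all cross-covariances between a first derivative $f_i$ and a second derivative $f_{jk}$ vanish. The involution $(\lambda_1,\lambda_2)\mapsto(-\lambda_1,\lambda_2)$ (which preserves $\Ec_n$ since $n=\lambda_1^2+\lambda_2^2$) kills any monomial sum with an odd exponent on $\lambda_1$; applied with the analogous reflection in the second coordinate, this gives $\E[f_1 f_2]=0$ and $\E[f_{11}f_{12}]=\E[f_{22}f_{12}]=0$.

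It only remains to evaluate the nonzero diagonal and the single off-diagonal entry $\E[f_{11}f_{22}]$. For the first-derivative block,
\[
\var(f_i(0)) = 4\pi^2\cdot \frac{1}{N_n}\sum_{\lambda\in\Ec_n}\lambda_i^2
= 4\pi^2 \cdot \frac{n}{2} = 2\pi^2 n,
\]
using $\lambda_1^2+\lambda_2^2=n$ and the symmetry between the two coordinates. For the second-derivative block I would plug Lemma \ref{lem:sum lambda1^4,lambda1^2lambda2^2} into
\[
\var(f_{ii}(0))=16\pi^4\cdot\frac{1}{N_n}\sum_\lambda\lambda_i^4,\quad
\E[f_{11}f_{22}]=\var(f_{12})=16\pi^4\cdot\frac{1}{N_n}\sum_\lambda\lambda_1^2\lambda_2^2,
\]
which produces the stated factors $2\pi^4 n^2(3+\widehat{\mu_n}(4))$ and $2\pi^4 n^2(1-\widehat{\mu_n}(4))$ respectively. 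There is no real obstacle here; the only point that requires a moment of care is bookkeeping the two symmetry arguments so as to justify, without case-checking, that only the four listed nonzero entries survive.
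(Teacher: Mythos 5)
Your proposal is correct and follows essentially the same route as the paper: the paper expresses the entries as derivatives of the covariance kernel $r_n$ at $0$ (e.g.\ $\E[f_1^2]=-r_{11}(0)$) and then evaluates those derivatives as the same normalised monomial sums $\frac{1}{N_n}\sum_\lambda \lambda_1^a\lambda_2^b$ that you obtain by differentiating the Fourier expansion directly, invoking Lemma~\ref{lem:sum lambda1^4,lambda1^2lambda2^2} for the quartic sums. The only cosmetic difference is that you spell out the two reflections of $\Ec_n$ that kill the odd-moment entries, where the paper simply writes ``using the symmetries.''
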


\begin{proof}
Recall that the covariance function of $f_{n}$ is given  by \eqref{eq:r covar def}.
We have, using the symmetries,
\begin{equation*}
\E[f_{1}(x)^{2}]=-r_{11}(0)=\E[f_{2}(x)^{2}] = 2\pi^{2}n
\end{equation*}
\begin{equation*}
\E[f_{1}(x)f_{2}(x)]=-r_{12}(0) = 0,
\end{equation*}
\begin{equation*}
\E[f_{1}(x)f_{11}(x)] =-r_{111}(0)= 0,
\end{equation*}
\begin{equation*}
\E[f_{1}(x)f_{12}(x)]=-r_{112}(0)=0,
\end{equation*}
\begin{equation*}
\begin{split}
\E[f_{11}(x)^{2}]&=\E[f_{22}(x)^{2}] =r_{1111}(0) = \frac{16\pi^{4}}{N}\sum\limits_{\lambda\in\Ec_{n}}\lambda_{1}^{4}
\\&= 16\pi^{4}n^{2}\left(\frac{3}{8}+\frac{1}{8}\widehat{\mu_{n}}(4)\right) = 2\pi^{4}n^{2} \left(3+\widehat{\mu_{n}}(4)\right)
\end{split}
\end{equation*}
by Lemma \ref{lem:sum lambda1^4,lambda1^2lambda2^2}, and
\begin{equation*}
\begin{split}
\E[f_{12}(x)^{2}]&= \E[f_{11}(x)f_{22}(x)] =
\frac{16\pi^{4}}{N}\sum\limits_{\lambda\in\Ec_{n}}\lambda_{1}^{2}\lambda_{2}^{2}
 = 2\pi^{4} n^{2}\left(1- \widehat{\mu_{n}}(4)  \right).
\end{split}
\end{equation*}

\end{proof}

\appendix

\section{Separable domains}\label{sec:separable}

We describe some  cases when the nodal sets, hence $N_\zeta(f)$, can be explicitly computed.

\subsection{Irrational rectangles}
Take a rectangle with width $ \pi/\sqrt{\alpha}$ and height $ \pi$, with aspect ratio $\sqrt{\alpha}  $, and assume that $\alpha$ is irrational.
Then the eigenvalues of the Dirichlet Laplacian consist  of the numbers $  \alpha m^2+n^2$ with integers $m,n\geq 1$, and the corresponding eigenfunctions are
$$f_{m,n}(x,y) =\sin(\sqrt{\alpha}mx)\sin(ny) \;.
$$
The nodal lines consist of a rectangular grid, and one has $N_\zeta(f_{m,n})=0\;{\rm or}\;\infty$.

\subsection{The unit disk}
Let $\Omega = \{|x|\leq 1\}$ be the unit disk, and $(r,\theta)$ be polar coordinates.
The eigenfunctions of the Dirichlet Laplacian are
$$ f_{m,k}(r,\theta) = J_m(j_{m,k}r)\cos(m\theta+\phi) $$
where $J_m(z)$ is the Bessel function, with zeros $\{j_{m,k}:k\geq 1\}$, and $\phi\in [0,2\pi)$ is arbitrary. The corresponding eigenvalue is
\begin{equation}
\label{eq:eigval disk}
E=j_{m,k}^2.
\end{equation}
In particular, for $m\geq 1$ the eigenspaces have dimension two.

We will need McCann's inequality \cite{McCann}
\begin{equation}\label{McCann ineq}
j_{m,k}^2 \geq \pi^2(k-\frac 14)^2 +m^2.
\end{equation}
For $m=0$ (the radial case), the eigenfunctions are $f_{0,k}(r,\theta) =J_0(j_{0,k}r)$, $0\leq r\leq 1$, and have $k-1$ interior nodal lines, which are the concentric circles $r=j_{0,\ell}/j_{0,k}$, $\ell=1,\dots, k-1$. Thus for any direction $\zeta\in S^1$, we have
$$ N_\zeta(f_{0,k}) = 2(k-1).$$

For $m\geq 1$, the nodal line of the eigenfunction $f_{m,k}$ is a union of the $m$ diameters $\cos(m\theta+\phi)=0$ and $k-1$ concentric circles $r=j_{m,\ell}/j_{m,k}$, $\ell=1,\dots,k-1$ (for $k=1$ there are only diameters), see Figure~\ref{fig:diskm3k5}. Thus there are $2m$ values of $\zeta$ where $N_\zeta(f_{m,k})=\infty$, and for all other directions we have
$$N_\zeta(f_{m,k})=2(k-1).$$
Using
McCann's inequality \eqref{McCann ineq}, and \eqref{eq:eigval disk}, the above yields that for $m\geq 0$, $k\geq 1$, except for $2m\leq 2\sqrt{E}$ directions where $N_\zeta(f_{m,k})=\infty$, we have
$$N_\zeta(f_{m,k}) \leq \frac 2\pi \sqrt{E}  \;.
$$

\begin{figure}[ht]
\begin{center}
\includegraphics[height=60mm]{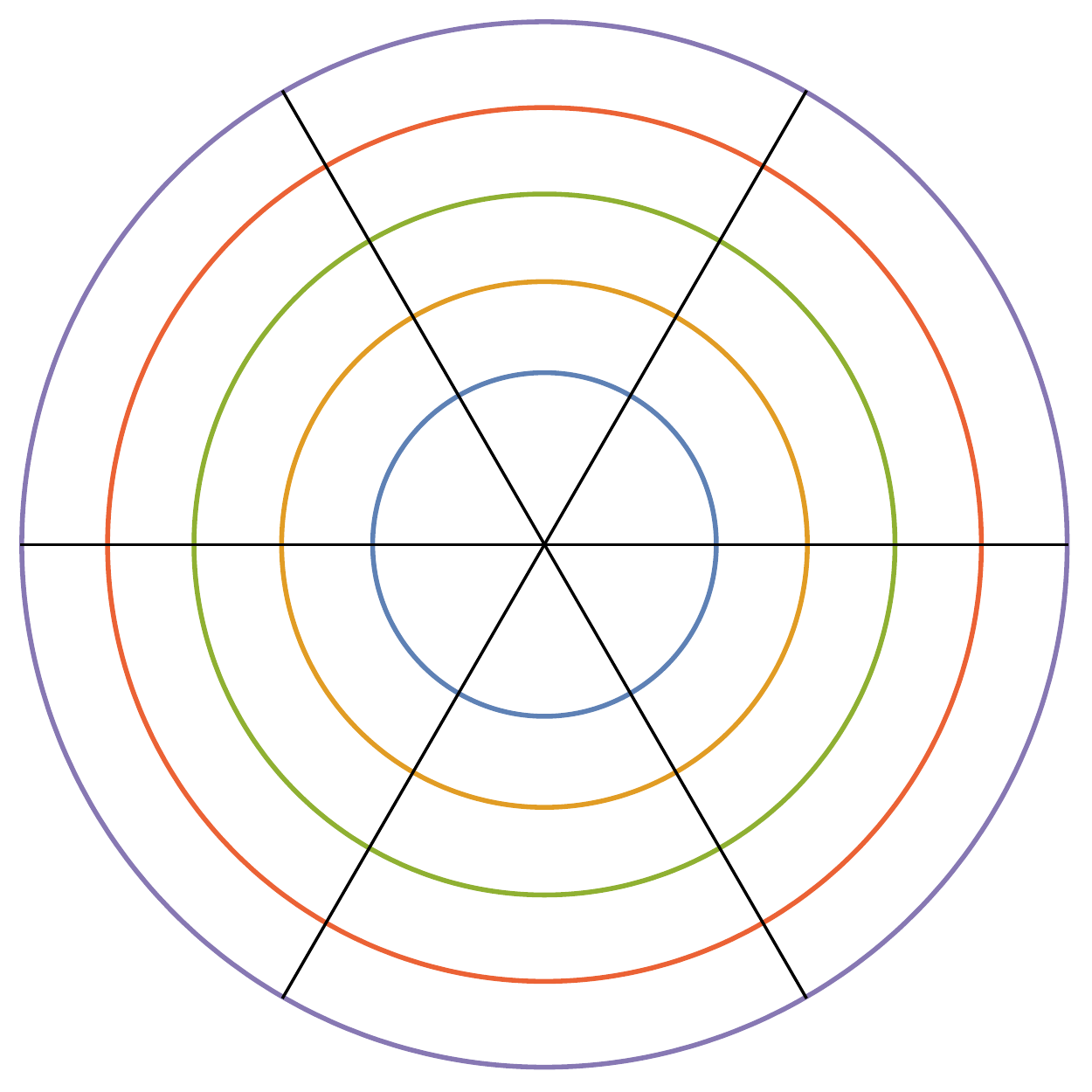}
\caption{The nodal line of the disk eigenfunction $f_{3,5}(x) = J_3(j_{3,5}r)\cos(3\theta)$, which consists of $3$ diameters and $4$ circles.}
\label{fig:diskm3k5}
\end{center}
\end{figure}


\begin{thebibliography}{99}


\bibitem{AW}
J.-M. Aza\"{i}s, M. Wschebor, {\em Level sets and extrema of random
processes and fields}, John Wiley \& Sons Inc., Hoboken, NJ, 2009.


\bibitem{Bourgain Pleijel}
J. Bourgain, {\em On Pleijel's nodal domain theorem}.
Int. Math. Res. Not. IMRN 2015, no. 6, 1601--1612.


\bibitem{CNNV}
M.-C. Chang, H. Nguyen, O. Nguyen and V. Vu.
{\em Random eigenfunctions on flat tori: universality for the number of intersections. }
	arXiv:1707.05255 [math.PR]

\bibitem{Donnelly-Fefferman}
H.~Donnelly, and C.~Fefferman {\em Nodal sets of eigenfunctions on
Riemannian manifolds}, Invent. Math. {\bf 93} (1988), 161--183.



\bibitem{KW}
P. Kurlberg and I. Wigman,
{\em Variation of the Nazarov-Sodin constant,} available online \url{https://arxiv.org/abs/1707.00766}. Announcement
C. R. Math. Acad. Sci. Paris 353 (2015), no. 2, 101--104.

\bibitem{K-K-W} 
M. Krishnapur, P. Kurlberg and I. Wigman. \emph{Nodal length fluctuations for arithmetic random waves}, Ann. Math. (2) 177, no. 2, 699--737 (2013).


\bibitem{Logunov2}
A.  Logunov {\em Nodal sets of Laplace eigenfunctions: polynomial upper estimates of the Hausdorff measure}.
Annals of Math.,    187 (2018), Issue 1 221--239. 	arXiv:1605.02587 [math.AP]

\bibitem{Logunov1}
A.  Logunov {\em Nodal sets of Laplace eigenfunctions: proof of Nadirashvili's conjecture and of the lower bound in Yau's conjecture}.
Annals of Math.,    187 (2018), Issue 1 241--262.	arXiv:1605.02589 [math.AP]

\bibitem{LM}
A.  Logunov  and E.  Malinnikova
{\em Nodal sets of Laplace eigenfunctions: estimates of the Hausdorff measure in dimension two and three}, 	arXiv:1605.02595 [math.AP]

\bibitem{MPRW}
D. Marinucci, G. Peccati, M.  Rossi and I. Wigman, {\em  Non-universality of nodal length distribution for arithmetic random waves}.
Geom. Funct. Anal. 26 (2016), no. 3, 926--960.

\bibitem{McCann}
R.C. McCann.
Lower bounds for the zeros of Bessel functions.
Proc. Amer. Math. Soc. 64 (1977), no. 1, 101--103.

\bibitem{PR}
G.  Peccati and M.  Rossi.
{\em Quantitative limit theorems for local functionals of arithmetic random waves.}
	arXiv:1702.03765 [math.PR]


\bibitem{Sodin-Nazarov}
F. Nazarov and M.  Sodin, {\em  On the number of nodal domains of random spherical harmonics}. Amer. J. Math. 131 (2009), no. 5, 1337--1357.

\bibitem{NS} 
Nazarov, F.; Sodin, M.
{\em Asymptotic laws for the spatial distribution and the number of connected components of zero sets of Gaussian random functions.}
Zh. Mat. Fiz. Anal. Geom. 12 (2016), no. 3, 205--278.

\bibitem{ORW2007}
F. Oravecz, Z. Rudnick and I. Wigman {\em The Leray measure of nodal sets for random eigenfunctions on the torus}.
Annales de l'institut Fourier, Volume 58 (2008) no. 1 , p. 299--335



\bibitem{Rozenshein}
Y. Rozenshein, {\em The Number of Nodal Components of Arithmetic Random Waves}. Int. Math. Res. Not. IMRN 2017, no. 22, 6990--7027.  arXiv:1604.00638 [math.CA]


\bibitem{RW2008}
Z. Rudnick and I. Wigman.
{\em  On the volume of nodal sets for eigenfunctions of the Laplacian on the torus}. Ann. Henri Poincar\'e 9 (2008), no. 1, 109--130.


\bibitem{Swerling}
P. Swerling. {\em Statistical properties of the contours of random surfaces}. IRE Trans. IT-8 1962 315--321.

\end{thebibliography}
\end{document}